\newcommand{\R}{\mathbb{R}}
\newcommand{\inr}[1]{\left\langle #1 \right\rangle}
\newcommand{\E}{\mathbb{E}}
\newcommand{\FF}{{\mathcal{F}}}
\newcommand{\GG}{{\mathcal{G}}}
\newcommand{\eps}{\varepsilon}
\newcommand{\conv}{\mathop{\rm conv}}
\newcommand{\triple}[1]{\left\vert\!\left\vert\!\left\vert {#1} \right\vert\!\right\vert\!\right\vert}
\newtheorem{Theorem}{Theorem}[section]
\newtheorem{Lemma}[Theorem]{Lemma}
\newtheorem{Definition}[Theorem]{Definition}
\newtheorem{Proposition}[Theorem]{Proposition}
\newtheorem{Corollary}[Theorem]{Corollary}
\newtheorem{Remark}[Theorem]{Remark}
\newtheorem{Assumption}[Theorem]{Assumption}
\newtheorem{Question}[Theorem]{Question}
\numberwithin{equation}{section}
\def \proof {\noindent {\bf Proof.}\ \ }
\def \endproof
\def\IND{\mathbbm{1}}
\newcommand{\iid}{i.i.d.\ }
\newcommand{\norm}{\|\cdot\|}
\newcommand{\PROB}{\mathbb{P}}
\newcommand{\LL}{{D}}
\def\IND{\mathbbm{1}}
\newcommand{\footremember}[2]{%
\footnote{#2}
\newcounter{#1}
\setcounter{#1}{\value{footnote}}%
}
\newcommand{\footrecall}[1]{%
\footnotemark[\value{#1}]%
}
\title{On the geometry of polytopes generated by heavy-tailed random vectors}%\title{On inscribed bodies of random polytopes}
\date{\today}
\author{Olivier Gu\'edon\footremember{Upem}{Universit\'e Paris-Est, Laboratoire d'Analyse et de Math\'ematiques Appliqu\'ees (UMR 8050), UPEM, UPEC, CNRS, F-77454, Marne-la-Vall\'ee, France (\href{mailto:olivier.guedon@u-pem.fr}{olivier.guedon@u-pem.fr})} \and Felix Krahmer\footremember{TUM}{Department of Mathematics, Technical University of Munich, 85748 Garching bei M\"unchen, Germany (\href{mailto:felix.krahmer@tum.de}{felix.krahmer@tum.de}, \href{mailto:c.kuemmerle@tum.de}{c.kuemmerle@tum.de})} \and Christian K\"ummerle\footrecall{TUM} \and Shahar Mendelson \footremember{Sor}{LPSM, Sorbonne University, Paris, France and Mathematical Sciences Institute, The Australian National University, Canberra, Australia. (\href{shahar.mendelson@upmc.fr}{shahar.mendelson@anu.edu.au})} \and
Holger Rauhut\footremember{RWTH}{Chair for Mathematics of Information Processing, RWTH Aachen University, 52056 Aachen, Germany (\href{rauhut@mathc.rwth-aachen.de}{rauhut@mathc.rwth-aachen.de})}}
\begin{document}

\maketitle

\begin{abstract}
We study the geometry of centrally-symmetric random polytopes, generated by $N$ independent copies of a random vector $X$ taking values in $\R^n$. We show that under minimal assumptions on $X$, for $N \gtrsim n$ and with high probability, the polytope contains a deterministic set that is naturally associated with the random vector---namely, the polar of a certain floating body. This  solves the long-standing question on whether such a random polytope contains a canonical body.
Moreover, by identifying the floating bodies associated with various random vectors we recover the estimates that have been obtained previously, and thanks to the minimal assumptions on $X$ we derive estimates in cases that had been out of reach, involving random polytopes generated by heavy-tailed random vectors (e.g., when $X$ is $q$-stable or when $X$ has an unconditional structure). Finally, the structural results are used for the study of a fundamental question in compressive sensing---noise blind sparse recovery.

\end{abstract}

\section{Introduction} \label{sec:introduction}
Let $X$ be a symmetric random vector in $\R^n$ and let $X_1,\ldots,X_N$ be independent copies of $X$. The goal of this article is to study the geometry of the random polytope
${\rm absconv}(X_1, \ldots,X_N),$
that is, the convex hull of the points $\pm X_1, \ldots, \pm X_N$.
Various aspects of the geometry of such random polytopes have been the subject of extensive study for many years. As a starting point, let us formulate two notable results in the direction we are interested in, and to that end, denote by $B_p^n$ the unit ball in $\ell_p^n$.
\begin{Theorem} \label{thm:Gluskin} \cite{G}
Let $X$ be the standard Gaussian random vector in $\R^n$, set $0<\alpha<1$ and consider $N \geq c_0(\alpha) n$. Then
\begin{equation} \label{eq:gaussian-est}
c_1(\alpha) \sqrt{\log(eN/n)} B_2^n \subset {\rm absconv}(X_1,\ldots,X_N)
\end{equation}
with probability at least $1-2\exp(-c_2N^{1-\alpha} n^{\alpha})$. Here $c_0$ and $c_1$ are constants that depend on $\alpha$ and $c_2$ is an absolute constant.
\end{Theorem}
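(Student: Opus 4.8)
The plan is to dualize the inclusion and then reduce it to a uniform deviation bound for an empirical process indexed by the sphere. The key point is that the relevant complexity parameter turns out to be the VC dimension of a class of slabs, which is $O(n)$ rather than $O(n\log n)$, and this is what makes the regime $N\gtrsim n$ (rather than $N\gtrsim n\log n$) attainable.

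First, set $K=\absconv(X_1,\dots,X_N)$. Its support function at a unit vector $\theta$ equals $\max_{i\le N}|\langle X_i,\theta\rangle|$, so by the correspondence between inclusions of symmetric convex bodies and inequalities between support functions, the inclusion $r B_2^n\subset K$ with $r=c_1(\alpha)\sqrt{\log(eN/n)}$ is equivalent to $\inf_{\theta\in S^{n-1}}\max_{i\le N}|\langle X_i,\theta\rangle|\ge r$; that is, I must rule out a unit vector $\theta$ along which \emph{all} the scalars $\langle X_i,\theta\rangle$ are small. For $\theta\in S^{n-1}$ put $\phi_\theta(x)=\IND\{|\langle x,\theta\rangle|\le r\}$. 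Then $\max_{i\le N}|\langle X_i,\theta\rangle|<r$ is exactly the event $\frac1N\sum_{i\le N}\phi_\theta(X_i)=1$, and since this average is a multiple of $1/N$ it suffices to prove
\[
\sup_{\theta\in S^{n-1}}\frac1N\sum_{i=1}^N\phi_\theta(X_i)<1\qquad\text{with the stated probability.}
\]
Two Gaussian facts will be used: by rotation invariance $\E\phi_\theta(X)=\PROB(|g|\le r)$ for every unit $\theta$ (with $g$ a standard normal variable), so $1-\E\phi_\theta(X)=\PROB(|g|>r)=:p_r$ does not depend on $\theta$, and the Gaussian lower tail gives $p_r\asymp r^{-1}e^{-r^2/2}$, hence $p_r\asymp(\log(eN/n))^{-1/2}(n/eN)^{c_1^2/2}$ in the range of interest. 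Thus the goal becomes: with high probability, $\sup_{\theta}\big(\tfrac1N\sum_i\phi_\theta(X_i)-\E\phi_\theta(X)\big)<p_r$.

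For the uniform deviation, observe that $\{\phi_\theta:\theta\in S^{n-1}\}$ is contained in the class of indicators of symmetric slabs $\{x:|\langle x,a\rangle|\le 1\}$, which is a VC class of dimension $O(n)$; symmetrization together with Dudley's entropy integral then gives $\E\sup_{\theta}\big|\tfrac1N\sum_i\phi_\theta(X_i)-\E\phi_\theta(X)\big|\lesssim\sqrt{n/N}$. Since replacing one $X_i$ changes this supremum by at most $1/N$, the bounded differences inequality shows that with probability at least $1-\exp(-c_2N^{1-\alpha}n^\alpha)$ the supremum is at most $\Delta:=C\big(\sqrt{n/N}+(n/N)^{\alpha/2}\big)$. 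It remains to calibrate the constants so that $\Delta<p_r$ on the relevant range: since $\alpha<1$, the quantity $\Delta$ decays in $N/n$ with exponent $\alpha/2$ while $p_r$ decays with exponent $c_1^2/2$, so choosing $c_1=c_1(\alpha)$ with $c_1^2<\alpha$ makes $p_r$ the dominant term, and then $\Delta<p_r$ holds once $N/n\ge c_0(\alpha)$ for a threshold depending only on $\alpha$. On the intersection of the two events this forces $\sup_\theta\frac1N\sum_i\phi_\theta(X_i)<1$, which is the assertion.

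I expect the calibration in the last step to be the main obstacle: one must match the polynomial small-ball rate $p_r$ of the Gaussian against the fluctuation level $\Delta$ produced by the concentration inequality \emph{at the prescribed confidence} $1-\exp(-c_2N^{1-\alpha}n^\alpha)$, and it is precisely this comparison that pins down the scale $r\asymp\sqrt{\log(eN/n)}$ with a constant strictly below $\sqrt\alpha$; a possible logarithmic loss in the bound on $\E\sup_\theta|\cdots|$ would be harmless, being absorbed into $c_0(\alpha)$. It is also worth recording why one routes the argument through VC theory rather than through an $\varepsilon$-net of $S^{n-1}$: transferring from a net to the whole sphere would require controlling a perturbation by $\|A\|_{2\to\infty}\asymp\sqrt n$ (with $A$ the matrix whose rows are the $X_i$), which costs a logarithmic factor and only yields $N\gtrsim n\log n$.
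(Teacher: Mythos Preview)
Your argument is correct and takes a genuinely different route from the paper's. The paper recovers Theorem~\ref{thm:Gluskin} as the Gaussian special case of its main result, Theorem~\ref{thm:main1}, after identifying $K_p(G)\sim p^{-1/2}B_2^n$. The proof of Theorem~\ref{thm:main1} is a two-stage small-ball argument: it takes an $\eta$-net $T$ of $\partial K_p(X)$, uses Chernoff plus a union bound to secure, at every net point $\pi t$, at least $\tfrac{N}{2}e^{-p}$ indices with $\langle X_i,\pi t\rangle\ge 1$, and then controls the \emph{oscillation} class $\{\IND_{\{|\langle\cdot,u\rangle|\ge 1/2\}}:\|u\|\le\eta/\gamma\}$ via Talagrand's Bernstein-type inequality for VC classes, crucially exploiting that this class has small variance because $\eta$ is tiny. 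You instead apply VC theory once, to the full class of slab indicators $\{\phi_\theta:\theta\in S^{n-1}\}$ with variance $\approx 1$, and concentrate the supremum with bounded differences; the Gaussian small-ball probability $p_r$ then plays the role the paper assigns to $e^{-p}$. Your approach is shorter and uses only Hoeffding-level concentration, but it leans on rotation invariance (so that $\E\phi_\theta$ is constant on the sphere) and does not extend to the general setting of Assumption~\ref{ass:main}, which is precisely what the paper's more elaborate net-plus-perturbation scheme buys. One small correction to your closing commentary: the paper \emph{does} use an $\varepsilon$-net, but not by Lipschitz extension through $\|A\|_{2\to\infty}$; the net carries the main term via Chernoff and the union bound, while VC theory handles only the residual, so the dichotomy ``VC versus net'' you draw is not quite the right one.
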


Theorem \ref{thm:Gluskin} can be extended beyond the Gaussian case, to random polytopes generated by a random vector $X=(\xi_1,\ldots,\xi_n)$ where the $\xi_i$'s are independent copies of a mean-zero, variance $1$ random variable $\xi$ that is $L$-subgaussian\footnote{Recall that a centered random variable is $L$-subgaussian if for every $p \geq 2$,
$\left(\E |\xi|^p\right)^{1/p} \leq L \sqrt{p}$.}. This class of random vectors includes, in particular, the Rademacher vector $X$, which is uniformly distributed in $\{-1,1\}^n$.

A version of Theorem \ref{thm:Gluskin} for the Rademacher vector was established in \cite{GH} (with a slightly suboptimal dependence of $N$ on the dimension $n$). 
The optimal estimate when $\xi$ is an arbitrary subgaussian random variable is the following special
case of a result in  \cite{LPRT}.

\begin{Theorem} \label{thm:LPRT} \cite{LPRT}
Let $\xi$ be a mean-zero random variable that has variance $1$ and is $L$-subgaussian, and set $X=(\xi_i)_{i=1}^n$ as above. Let $0<\alpha<1$ and set $N \geq c_0(\alpha,L)n$. Then there exists an absolute constant $c_1$ such that with probability
at least $1-2\exp(-c_1N^{1-\alpha}n^{\alpha})$
\begin{equation} \label{eq:subgaussian-est}
c_2(\alpha,L)\bigl(B_\infty^n \cap \sqrt{\log(eN/n)} B_2^n\bigr) \subset {\rm absconv}(X_1,\ldots,X_N).
\end{equation}
\end{Theorem}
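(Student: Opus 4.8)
The plan is to dualize, reduce the inclusion to a pointwise small-ball estimate for linear forms in $X$, and then make that estimate uniform over all directions by a VC argument. Write $P=\absconv(X_1,\dots,X_N)$, $r=\sqrt{\log(eN/n)}$ and $K=B_\infty^n\cap rB_2^n$. Since $P^\circ=\{y:\max_{i\ls N}|\inr{X_i,y}|\ls 1\}$ and $(c_2K)^\circ=c_2^{-1}K^\circ$ with $K^\circ=\conv(B_1^n\cup r^{-1}B_2^n)$, the inclusion $c_2K\subseteq P$ is, by homogeneity, equivalent to
\begin{equation}\label{eq:dual}
\max_{i\ls N}|\inr{X_i,y}|>c_2\qquad\text{for every }y\text{ with }\|y\|_{K^\circ}=1.
\end{equation}
The gauge of $K^\circ$ is the inf-convolution $\|y\|_{K^\circ}=\inf\{\|u\|_1+r\|v\|_2:u+v=y\}$, for which Holmstedt's formula for the couple $(\ell_1^n,\ell_2^n)$ gives
\begin{equation}\label{eq:holmstedt}
\|y\|_{K^\circ}\;\asymp\;\sum_{j\ls r^2}y_j^*\;+\;r\Bigl(\sum_{j>r^2}(y_j^*)^2\Bigr)^{1/2},
\end{equation}
where $y^*$ is the decreasing rearrangement of $(|y_j|)_j$. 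I fix a small $\ka=\ka(\alpha,L)$, set $m=\lceil\ka r^2\rceil$, and for $y$ as in \eqref{eq:dual} split $y=y_T+y_{T^c}$ with $T$ the indices of the $m$ largest coordinates. Then $\|y\|_{K^\circ}\le\|y_T\|_1+r\|y_{T^c}\|_2\ls\ka^{-1}\|y\|_{K^\circ}$ by \eqref{eq:holmstedt}, so (normalizing $\|y\|_{K^\circ}=1$) one of $\|y_T\|_1$, $r\|y_{T^c}\|_2$ is bounded below by an absolute constant, and both are $\ls C(\ka)$.

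The heart of the proof will be the following uniform small-ball bound: for $\ka$ and $c_2=c_2(\alpha,L)$ small enough, every $y$ as above satisfies
\begin{equation}\label{eq:smallball}
\PROB\bigl(|\inr{X,y}|>c_2\bigr)\;\gr\;\delta_0:=(n/N)^{\alpha/2}.
\end{equation}
Since $\inr{X,y}=\inr{X,y_T}+\inr{X,y_{T^c}}$ is a sum of two independent symmetric variables, it suffices to bound from below the probability that whichever of $\inr{X,y_T}$, $\inr{X,y_{T^c}}$ corresponds to the dominant term exceeds $2c_2$, the other being nonnegative with probability $\gr 1/2$ by symmetry and independence. If $\|y_T\|_1\gtrsim 1$, then $\inr{X,y_T}$ is a linear form in at most $m$ i.i.d. copies of $\xi$: choosing $a$ with $p_a:=\PROB(\xi>a)\gr c(L)>0$ and $c_2$ small enough that $a\|y_T\|_1>2c_2$, the event $\{\xi_j\,\sign(y_j)>a$ for all $j$ in the support of $y_T\}$ has probability $\gr p_a^m=(eN/n)^{-\ka\log(1/p_a)}$ and forces $\inr{X,y_T}>2c_2$; this is $\gr\delta_0$ as soon as $\ka\ls\alpha/(2\log(1/p_a))$. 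If instead $r\|y_{T^c}\|_2\gtrsim 1$, then \eqref{eq:holmstedt} also forces $\|y_{T^c}\|_\infty=y_{m+1}^*\ls m^{-1}\sum_{j\ls m}y_j^*\lesssim(\ka r^2)^{-1}$, so $y_{T^c}$ is spread over $\gtrsim\ka^2r^2$ coordinates; aligning the signs on its $\ell=O(c_2\ka r^2)$ largest coordinates makes their contribution exceed $3c_2$, while (by Chebyshev and independence) the remaining coordinates contribute at least $-2\|y_{T^c}\|_2\gr -c_2$ once $r$ exceeds a constant depending on $\alpha,L$ — the bounded-$r$ range, where $\delta_0$ is itself bounded below, being handled directly by anti-concentration (Paley--Zygmund). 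This yields $\PROB(|\inr{X,y_{T^c}}|>c_2)\gtrsim p_a^{\,\ell}=(n/N)^{O(c_2\ka)}\gr\delta_0$ when $c_2\ka$ is small enough, proving \eqref{eq:smallball}.

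To upgrade \eqref{eq:smallball} to a uniform statement I would use the empirical process over the class of sets $\{\,z:|\inr{z,\theta}|>c_2\,\}$, $\theta\in\R^n$, which is contained in the class of complements of slabs and hence has VC dimension $O(n)$, so that with probability at least $1-e^{-u}$,
\[
\sup_{\theta\in\R^n}\Bigl|\tfrac1N\#\{\,i\ls N:|\inr{X_i,\theta}|>c_2\,\}-\PROB(|\inr{X,\theta}|>c_2)\Bigr|\;\ls\;C\sqrt{n/N}+\sqrt{u/N}.
\]
Taking $u=c_1N^{1-\alpha}n^{\alpha}$ with $c_1$ small, and using that $\delta_0N=N^{1-\alpha/2}n^{\alpha/2}$ exceeds $\sqrt{nN}$ by the factor $(N/n)^{(1-\alpha)/2}$, there is $c_0(\alpha,L)$ so that the right-hand side is $\ls\tfrac12\delta_0$ whenever $N\gr c_0(\alpha,L)\,n$. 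On that event, \eqref{eq:smallball} gives $\#\{\,i\ls N:|\inr{X_i,y}|>c_2\,\}\gr\tfrac12\delta_0N>0$ for every $y$ with $\|y\|_{K^\circ}=1$, which is exactly \eqref{eq:dual}; absorbing the far more unlikely exceptional events used along the way — for the sizes of $\max_i\|X_i\|_\infty$, $\max_i\|X_i\|_2$, and the like — into the constant yields the failure probability $2\exp(-c_1N^{1-\alpha}n^{\alpha})$.

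I expect the main obstacle to be \eqref{eq:smallball}: $L$-subgaussianity only furnishes \emph{upper} tail bounds, so producing a \emph{lower} bound on $\PROB(|\inr{X,y}|>c_2)$ genuinely requires the structural dichotomy coming from $y\in\partial K^\circ$ — peaked on $\lesssim r^2$ coordinates, where a finite sign-alignment suffices, versus spread over $\gtrsim r^2$ of them, where the same device applied to enough coordinates (plus control of the residual mass) does the job, with a slightly awkward transition in between. The second, more bookkeeping-type, difficulty is to choose $\ka$ and $c_2$ as functions of $\alpha$ and $L$ so that the resulting $\delta_0$ still beats the VC fluctuation $\sqrt{n/N}$ precisely in the regime $N\gtrsim_{\alpha,L}n$; this is what makes $c_0$ and $c_2$ in the statement depend on $\alpha$ (and $L$).
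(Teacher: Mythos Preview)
Your global architecture --- dualize to the boundary of $K^\circ=\conv(B_1^n\cup r^{-1}B_2^n)$, establish a pointwise small-ball estimate $\PROB(|\inr{X,y}|>c_2)\geq\delta_0$, then make it uniform via a VC bound --- is essentially the small-ball method the paper uses to prove its main abstract result, Theorem~\ref{thm:main1}. The paper then deduces Theorem~\ref{thm:LPRT} from that result by identifying the floating body: since a mean-zero, variance-one, $L$-subgaussian $\xi$ satisfies $\PROB(|\xi|\geq\gamma)\geq\delta$ for some $\gamma,\delta$ depending only on $L$, tensorization of stochastic domination (Theorem~\ref{thm:tensor}) shows that $X$ dominates the Rademacher vector ${\cal E}$, hence $K_p(X)\subset c\,K_{p'}({\cal E})$; Montgomery-Smith's characterization $K_p({\cal E})\sim\conv(B_1^n\cup p^{-1/2}B_2^n)$ then finishes the identification. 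So the hard combinatorics you attempt directly are, in the paper, outsourced to two black boxes.

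Your direct attack on the small-ball estimate has a genuine gap in the spread case. The claim that aligning signs on the $\ell=O(c_2\kappa r^2)$ largest coordinates of $y_{T^c}$ forces their contribution above $3c_2$ implicitly assumes those coordinates have size $\sim 1/(\kappa r^2)$; but you only have the \emph{upper} bound $\|y_{T^c}\|_\infty\lesssim 1/(\kappa r^2)$. Take $y$ with all $n$ coordinates equal to $1/(r\sqrt{n})$: then $\|y\|_{K^\circ}\asymp 1$, you are squarely in the spread case, yet any set $S$ of size $\lesssim r^2$ has $\sum_{j\in S}|y_j|\lesssim r/\sqrt{n}$, which can be made arbitrarily small. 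Sign-alignment therefore requires $\ell\gtrsim c_2 r\sqrt{n}$ coordinates, costing probability $p_a^{\,c_2 r\sqrt{n}}$, far below $\delta_0$. In this regime $\inr{X,y_{T^c}}$ is essentially Gaussian with standard deviation $\sim 1/r$, and the correct lower bound $\PROB(|\inr{X,y_{T^c}}|>c_2)\gtrsim\exp(-c'c_2^2 r^2)$ must come from a Gaussian-type lower tail for the whole sum, not from fixing $O(r^2)$ signs. This is precisely the content of Montgomery-Smith's theorem for Rademachers (which pairs sign-alignment on the top of the rearrangement with an anti-concentration bound for the tail block), and its transfer to subgaussian coordinates via domination --- which is why the paper invokes those results rather than reproving them. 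Once you either patch the spread case with such an argument or import the Rademacher bound via domination, the rest of your outline goes through.
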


In both cases, the typical random polytope ${\rm absconv}(X_1,\ldots,X_N)$ contains a large regular convex body: a multiple of the Euclidean unit ball when $X$ is the standard Gaussian random vector, and an intersection body of two $\ell_p$ balls when $X$ is $L$-subgaussian and has \iid coordinates. As we explain in what follows, the fact that the bodies that are contained in ${\rm absconv}(X_1,\ldots,X_N)$ are different in these two examples is not a coincidence. Rather, it reflects the fact that a subgaussian random vector may in general generates a different geometry than the Gaussian one.

\medskip

Motivated by these two facts, we study the following questions:
\begin{Question} \label{Qu:main}
\begin{description}
\item{(1)} Is it possible to find a set $K$ that is naturally associated with $X$ and is contained in ${\rm absconv}(X_1,\ldots,X_N)$ with high probability?
\item{(2)} If the answer to $(1)$ is yes, when does $K$ contain large (intersections of) $\ell_p$ balls as, for example, in Theorems \ref{thm:Gluskin} and \ref{thm:LPRT}?
\end{description}
\end{Question}
Both Theorem~\ref{thm:Gluskin} and Theorem~\ref{thm:LPRT}, as well as the numerous other results in this direction, can be explained by a general principle stated in our main result, Theorem~\ref{thm:main1}, that answers part (1) of Question \ref{Qu:main}. 
The geometric features of $X$ that are significant in this context 
are reflected by the natural \emph{floating bodies} associated with $X$.
Part (2) of Question~\ref{Qu:main} will be answered in Section~\ref{sec:K-bodies}
by identifying those floating bodies for a variety of choices of $X$---thus recovering, and at times improving, previously known results, as well as establishing new estimates in cases that were out of reach before.
\begin{Definition}
Let $X$ be a symmetric random vector in $\R^n$. For $p \geq 1$, we define the  associated floating body
$$
K_{p}(X) := \left\{ t \in \R^n : \PROB( \inr{X,t} \geq 1) \leq \exp(-p) \right\}.
$$
\end{Definition}
The notion of floating bodies plays a crucial role in the study of approximation of convex bodies by polytopes, see, e.g., \cite{SW1, MR1, Barany}, where $X$ is distributed according to the uniform probability measure on the given convex body. It is known how to identify the floating bodies associated to Gaussian or Rademacher random vectors, see below. 

In order to continue we require the following notation.
Given sets $A,B \subset \R^n$, $A \sim B$ denotes that there are absolute constants $c_1$ and $c_2$ such that $c_1 A \subset B \subset c_2B$. We write $A \sim_{\kappa} B$ if the constants $c_1$ and $c_2$ depend on the parameter $\kappa$. Identifying each $t \in \R^n$ with the linear functional $\inr{\cdot,t}$, we define, for $p > 0$, the $L_p(X)$ (quasi-) norm of $t \in \R^n$ to be $\|\inr{X,t}\|_{L_p} = \left(\E | \inr{X,t} |^p \right)^{1/p}$, and denote its unit ball by
$$
B(L_p(X)) := \{ t \in \R^n : \|\inr{X,t}\|_{L_p} \leq 1\}.
$$
For $1 \leq q < \infty$ and $t = (t_1, \ldots, t_n)$, let
\[
\|t\|_q = \left(\sum_{i=1}^n |t_i|^q \right)^{1/q} \hbox{ \ and \ }
\|t\|_{\infty} = \max_{i=1, \ldots, n} |t_i|.
\]
For $1 \le q \le \infty,$
let $B_q^n=\{t \in \R^n : \|t\|_q \leq 1\}$
 be the unit ball of the normed space $\ell_q^n$, and set $q^\prime$ to be the conjugate index of $q$; that is, $\frac{1}{q}+\frac{1}{q^\prime} =1$. Finally, for $T \subset \R^n$ let
$$
T^\circ = \{x \in \R^n \ : \inr{t,x} \leq 1 \ \ {\rm for \ every \ } t \in T\};
$$
the set $T^\circ$ is the \emph{polar body} of $T$, which is %always
a convex, centrally symmetric subset of $\R^n$ if $T$ is centrally symmetric.

With this notation in place, consider the following examples:
\begin{description}
\item{$\bullet$} Let $X=G$ be the standard Gaussian random vector in $\R^n$. Then for every $p \geq 1$,
    \begin{equation}\label{Gaussian-floating}
    K_{p}(G) \sim  (1/\sqrt{p}) B_2^n,
    \end{equation}
    which can be shown by a direct calculation using the rotation invariance of $G$.
    Thus, the polar body of $K_{p}(G)$ satisfies $(K_{p}(G))^{\circ} \sim \sqrt{p} \, B_2^n$.
\item{$\bullet$} Let $X={\cal E}$ be the Rademacher random vector in $\R^n$ (i.e., $X$ is uniformly distributed in $\{-1,1\}^n$). Results in \cite{Ms} imply that
    $$
    K_{p}({\cal E}) \sim {\rm conv}(B_1^n \cup (1/\sqrt{p})B_2^n),
    $$
    and in particular, $(K_{p}({\cal E}))^\circ \sim B_\infty^n \cap \sqrt{p}B_2^n$.
\end{description}
Thus, the assertions of Theorem~\ref{thm:Gluskin} for $X=G$ and of Theorem~\ref{thm:LPRT} for $X={\cal E}$ can be formulated in a unified way: with high probability, it holds that
$$
{\rm absconv}(X_1,\ldots,X_N) \supset c_1 \bigl(K_{p}(X)\bigr)^\circ,
$$
for $p =c_2 \log(eN/n)$, where $c_1$ and $c_2$ are suitable constants.
Our main result shows that this phenomenon
holds under minimal assumptions on $X$, which we explain in the following. 

Let $\norm$ be a norm on $\R^n$ and set
$$
{\cal B} = {\cal B}_{\norm} =  \{x \in \R^n : \|x\| \leq 1\} \ \ \ {\rm and} \ \ \ {\cal S} = {\cal S}_{\norm} =\{x \in \R^n : \|x\|=1\}.
$$
The random vector $X$ is said to satisfy a small-ball condition with respect to the norm $\norm$ with constants $\gamma$ and $\delta$ if for every $t \in \R^n$,
\begin{equation} \label{eq:SB-intro}
\PROB( |\inr{X,t}| \geq \gamma \|t\| ) \geq \delta.
\end{equation}
Also, for some $r>0$, $X$ is said to satisfy an $L_r$ condition with respect to the norm $\norm$ and with constant $L$ if for every $t \in \R^n$,
\begin{equation} \label{eq:L-r-intro}
\left(\E |\inr{X,t}|^r\right)^{1/r} \leq L \|t\|.
\end{equation}
\begin{Assumption} \label{ass:main}
We assume that $X$ satisfies a small-ball condition with constants $\gamma>0$ and $\delta>0$, and an $L_r$ condition with constant $L$ for some $r>0$ with respect to the same norm $\norm$.
\end{Assumption}

Assumption \ref{ass:main} implies that the random vector $X$ is not degenerate: the small-ball condition \eqref{eq:SB-intro} means that marginals of $X$ do not have `too much' mass at $0$, and the $L_r$ condition \eqref{eq:L-r-intro} leads to some minimal uniform control on the tail decay of each marginal. Also, it is straightforward to verify from \eqref{eq:SB-intro} and \eqref{eq:L-r-intro} that
$$
\frac{1}{L} {\cal B} \subset B(L_r(X)) \subset \frac{1}{\gamma \delta^{1/r}} {\cal B}. 
$$

\medskip

Our answer to the first part of Question \ref{Qu:main} is that under  Assumption~\ref{ass:main}, a typical realization of ${\rm absconv}(X_1,\ldots,X_N)$ contains a constant 	multiple of $(K_{p}(X))^\circ$ for $p \sim \log(eN/n)$.
\begin{Theorem} \label{thm:main1}
Let $X$ be a symmetric random vector that satisfies Assumption \ref{ass:main} with respect to a norm $\norm$ and some $\delta, \gamma, r, L  > 0$. 
Let $0<\alpha<1$ and set $p=\alpha \log(eN/n)$ and assume that
 $N \geq c_0 n$ for a constant $c_0=c_0(\alpha,\delta,r,L/\gamma)$. Let $X_1,\hdots,X_N$ be independent copies of $X$ then with probability at least $1-2\exp(-c_1N^{1-\alpha} n^\alpha)$,
\begin{equation} \label{eq:main}
{\rm absconv}(X_1,\ldots,X_N) \supset \frac{1}{2}  \bigl(K_{p}(X)\bigr)^{\circ},
\end{equation}
where $c_1$ is an absolute constant.
\end{Theorem}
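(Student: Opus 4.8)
The plan is to reduce, by polar duality, the inclusion \eqref{eq:main} to a combinatorial statement about ``empty half-spaces'', and then to settle that statement with a \emph{distribution-free} VC estimate --- it is this last point that makes heavy tails harmless. Write $P=\mathrm{absconv}(X_1,\ldots,X_N)$ and $\widetilde K=\overline{\rm conv}\bigl(K_p(X)\cup\{0\}\bigr)$. Using Assumption~\ref{ass:main} one checks that $c\,e^{-p/r}L^{-1}\,\mathcal B\subseteq K_p(X)\subseteq\gamma^{-1}\mathcal B$ once $p=\alpha\log(eN/n)$ exceeds a threshold depending on $\delta$; this is exactly what the hypothesis $N\ge c_0 n$, $c_0=c_0(\alpha,\delta,r,L/\gamma)$, ensures, and it makes $\widetilde K$ a centrally-symmetric convex body with $0$ in its interior. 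Since $(K_p(X))^\circ=\widetilde K^\circ$, it suffices to prove $\tfrac12\widetilde K^\circ\subseteq P$. Because of the small-ball condition the support of $X$ spans $\R^n$, so $X_1,\ldots,X_N$ span $\R^n$ off an event of probability $\le e^{-cN}$; on its complement $P$ is a centrally-symmetric body with $0$ in its interior, whence by the bipolar theorem $\tfrac12\widetilde K^\circ\subseteq P$ is equivalent to $P^\circ\subseteq 2\widetilde K$, where $P^\circ=\{t\in\R^n:\max_{i\le N}|\inr{X_i,t}|\le 1\}$.

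Next I would recast $P^\circ\subseteq 2\widetilde K$ as a statement about half-spaces avoiding the sample. If it fails there is $t$ with $\max_i|\inr{X_i,t}|\le 1$ but $t/2\notin\widetilde K$, hence $t/2\notin K_p(X)$, i.e.\ $\PROB(\inr{X,t}\ge 2)>e^{-p}$; on the other hand $\max_i|\inr{X_i,t}|\le 1$ forces $\inr{X_i,t}<2$ for every $i$, i.e.\ no sample point lies in the closed half-space $H_t=\{x:\inr{x,t}\ge 2\}$, although $\PROB(X\in H_t)>e^{-p}$. So it is enough to show that, with the asserted probability, \emph{every} closed half-space $H\subseteq\R^n$ with $\PROB(X\in H)> e^{-p}$ contains at least one of $X_1,\ldots,X_N$.

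This is a ``relative'' (ratio-type) uniform deviation estimate over the class $\mathcal H$ of closed half-spaces, and the decisive feature --- the one that renders the tails of $X$ irrelevant --- is that $\mathrm{VC}(\mathcal H)=n+1$ \emph{independently of the distribution of $X$}. I would invoke the $\varepsilon$-net theorem / Vapnik--Chervonenkis relative deviation inequality (Sauer--Shelah, a multiplicative Chernoff bound, and symmetrisation): for every $\lambda\in(0,1]$,
\[
\PROB\Bigl(\exists\,H\in\mathcal H:\ \PROB(X\in H)\ge\lambda\ \text{and}\ \#\{i\le N:X_i\in H\}=0\Bigr)\ \le\ 2\Bigl(\tfrac{2eN}{n+1}\Bigr)^{n+1}2^{-\lambda N/2}.
\]
With $\lambda=e^{-p}=e^{-\alpha}(n/N)^\alpha$, so that $\lambda N=e^{-\alpha}N^{1-\alpha}n^\alpha$, the right-hand side is at most $\exp\!\bigl(C_1(n+1)\log(eN/n)-C_2N^{1-\alpha}n^\alpha\bigr)$. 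Since $x\mapsto x^{1-\alpha}/\log x\to\infty$, enlarging $c_0=c_0(\alpha)$ if necessary forces the first term to be at most half the second, so the bound is $\exp(-c_1N^{1-\alpha}n^\alpha)$ with $c_1$ absolute; combining this with the $e^{-cN}\le e^{-cN^{1-\alpha}n^\alpha}$ spanning-failure probability and the reduction above, and doubling, yields the theorem.

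The crux is this last probabilistic estimate, for two reasons. First, the ``signal'' is the tiny number $e^{-p}=(n/eN)^\alpha$, which is \emph{smaller} than the $\sqrt{n/N}$ error of the ordinary additive VC inequality, so one genuinely needs the multiplicative/ratio form, whose proof rests on the double-sampling trick. Second, matching the constants in the regime $N\asymp n$ requires the \emph{sharp} Sauer--Shelah bound $(eN/(n+1))^{n+1}$, giving a complexity cost of order $n\log(eN/n)$ rather than $n\log N$; only the former is dominated by $N^{1-\alpha}n^\alpha$. By contrast, the duality reduction and the sandwiching of $K_p(X)$ between multiples of $\mathcal B$ are routine. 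Finally, it is worth stressing why the combinatorial route is essential here: a naive $\varepsilon$-net argument on $\{t:\PROB(\inr{X,t}\ge 2)=e^{-p}\}$ would control $\max_i|\inr{X_i,t}|$ under perturbations of $t$ only in terms of $\max_i\|X_i\|_{\ast}$, which for heavy-tailed $X$ need not be bounded --- routing everything through the distribution-free combinatorics of half-spaces is exactly what circumvents this.
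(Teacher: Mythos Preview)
Your proof is correct and takes a genuinely different route from the paper's. Both arguments ultimately exploit that half-spaces in $\R^n$ form a VC class of dimension $n+1$, but they deploy this fact at different stages.

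The paper runs a small-ball/net argument: it fixes a metric $(\eta/\gamma)$-net $T$ of $\partial K_p(X)$ in the norm $\|\cdot\|$, uses Chernoff plus a union bound over $T$ to guarantee that for every net point at least $\tfrac{N}{2}e^{-p}$ of the $\inr{X_i,t}$ exceed $1$, and then controls the oscillation $|\inr{X_i,t-\pi t}|$ uniformly via Talagrand's concentration inequality for the VC class of indicators of unions of two half-spaces---here the $L_r$ condition enters essentially, to bound the variance parameter $\sigma^2$. Your argument bypasses the geometric net entirely: after the same polar-duality reduction you observe that failure of the inclusion means some half-space of mass $>e^{-p}$ is missed by the whole sample, and you dispose of this in one stroke with the Haussler--Welzl $\varepsilon$-net theorem for half-spaces.

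What your approach buys is economy and a cleaner dependence on the hypotheses: the $L_r$ condition is needed only to ensure $K_p(X)$ has nonempty interior (equivalently, that $(K_p(X))^\circ$ is bounded), not in the probabilistic estimate itself, so your $c_0$ effectively depends only on $\alpha$ and $\delta$. What the paper's route buys is a quantitatively stronger intermediate conclusion---each heavy half-space contains $\gtrsim Ne^{-p}$ sample points, not merely one---and it showcases the small-ball method, which is of independent interest. One minor remark: your spanning step (claiming $X_1,\ldots,X_N$ span $\R^n$ with probability $\ge 1-e^{-cN}$) is both unnecessary and not obviously justified from the small-ball condition alone; the equivalence $\tfrac12\widetilde K^\circ\subseteq P\iff P^\circ\subseteq 2\widetilde K$ already holds for arbitrary closed convex sets containing the origin, by bipolarity.
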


\begin{Remark}
Theorem~\ref{thm:main1} still holds -- even with the same constants and the same proof --
when ${\rm absconv}(X_1,\ldots,X_N)$
is replaced by the standard convex hull $\conv(X_1,\hdots,X_N)$, see also Remark~\ref{Rem:conv}.
\end{Remark}

Assumption~\ref{ass:main} is weaker than any of the assumptions in all previous results 
on the inner structure of ${\rm absconv}(X_1,\ldots,X_N)$. In particular, we allow heavy-tailed distributions and do not require independence of the entries of $X$. The freedom of choosing the norm $\norm$  makes the method very flexible.
Observe that \eqref{eq:main} does not depend on the specific choice of $\norm$,
but the constant $c_0=c_0(\alpha, \delta, r, L/\gamma)$ does. 
In fact, the constants $L$ and $\gamma$ may change when chaining the norm.
So the art consists in choosing a norm such that quotient $L/\gamma$, and hence, the constant $c_0$ 
become as small as possible.

As applications of Theorem~\ref{thm:main1} we show in Section~\ref{sec:K-bodies} how one can recover or improve the previous central results on the geometry of the random polytope ${\rm absconv}(X_1,\ldots,X_N)$ in this context. This is done by answering the second part of Question \ref{Qu:main}: we identify the floating bodied $K_p(X)$ in all those cases,
for example, when $X$ is the Gaussian vector (\cite{G}); when $X$ has \iid subgaussian centered coordinates (\cite{LPRT}); when $X$ is an isotropic, log-concave random vector (\cite{DGT}); and when $X$ has \iid centered coordinates that satisfy a small-ball condition (\cite{GLT}).

In addition, and thanks to the universality of Theorem~\ref{thm:main1}, one may establish various new outcomes that were previously completely out of reach like when $X$ is an unconditional random vector without necessarily independent entries, see Theorem \ref{thm:uncond}. The main applications we present  in this introduction are two results that we found to be particularly surprising: firstly, an answer to Question \ref{Qu:main} when $X$ has \iid  $q$-stable coordinates for $1 \leq q < 2$ (e.g., a Cauchy random vector); and secondly, an answer to a fundamental question on sparse recovery.
 
\subsection{Stable random vectors} \label{sec:intro-stable}
Consider standard $q$-stable random vectors for $1 \leq q <2 $ (a $2$-stable random vector is just a Gaussian), that is, vectors that have \iid standard $q$-stable random variables as coordinates. Recall that a random variable $\xi$ is standard $q$-stable if its characteristic function satisfies $\E[\exp(i t X)] = \exp(- |t|^q/2)$ for every $t \in \R$ (we consider only the symmetric case).
The following features of a standard $q$-stable random variable $\xi$ are of significance here:
\begin{description}
\item{$\bullet$} $\xi$ belongs to the weak-$L_q$ space; i.e., $\sup_{u>0} u^q \PROB(|\xi|>u) \leq C_q$, and for large values of $u$, $\PROB(|\xi|>u) \geq c_q/u^q$.
\item{$\bullet$} the stability property: if $\xi_1, \ldots,\xi_n$ are \iid copies of $\xi$ and $t \in \R^n$ then $\sum_{i=1}^n t_i \xi_i$ has the same distribution as $\|t\|_q \, \xi$.
\end{description}
	For a more comprehensive discussion on $q$-stable random variables see, e.g., \cite[Chapter 5]{LT}.
Note that for $q<2$, $\xi$ does not have a finite second moment,
which makes the analysis of the structure of the random polytope ${\rm absconv}(X_1,\hdots,X_N)$ more challenging.

The answer to Question \ref{Qu:main} for a $q$-stable random vector is as follows:
\begin{Theorem} \label{thm:stable}
Let $\xi$ be a standard, $q$-stable random variable for some $1 \leq q<2$. Let $\xi_1, \ldots,\xi_n$ be independent copies of $\xi$ and set $X=(\xi_i)_{i=1}^n$. Then for $0<\alpha<1$ and $N \geq c_0(\alpha,q)n$, with probability at least $1-2\exp(-c_1N^{1-\alpha}n^\alpha)$,
$$
{\rm absconv}(X_1,\ldots,X_N) \supset c_2(q) \left(\frac{N}{n}\right)^{\alpha/q} B_{q^\prime}^n
$$
where $1/q + 1/q^\prime = 1$.
\\
In particular, if $\xi$ is a  standard Cauchy random variable (corresponding to $q=1$) then
with probability at least $1-2\exp(-c_1N^{1-\alpha}n^\alpha)$
$$
{\rm absconv}(X_1,\ldots,X_N) \supset c_3 \left(\frac{N}{n}\right)^{\alpha} B_\infty^n.
$$
\end{Theorem}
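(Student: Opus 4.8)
The plan is to deduce Theorem~\ref{thm:stable} from the general principle in Theorem~\ref{thm:main1}, applied with the norm $\|\cdot\|_q$ on $\R^n$, and then to identify the resulting floating body $K_p(X)$ explicitly. First I would check that Assumption~\ref{ass:main} holds for $\norm=\|\cdot\|_q$. By the stability property, for every $t\in\R^n$ the random variable $\inr{X,t}=\sum_{i=1}^n t_i\xi_i$ has the same distribution as $\|t\|_q\,\xi$. Hence, taking $\gamma$ to be the median of $|\xi|$ (a constant depending only on $q$), one gets $\PROB(|\inr{X,t}|\ge\gamma\|t\|_q)=\PROB(|\xi|\ge\gamma)\ge 1/2$, so the small-ball condition \eqref{eq:SB-intro} holds with $\delta=1/2$. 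Moreover, fixing any exponent $r\in(0,q)$ --- for definiteness $r=q/2$ --- the weak-$L_q$ property of $\xi$ gives $L:=(\E|\xi|^r)^{1/r}<\infty$, depending only on $q$, and by stability $(\E|\inr{X,t}|^r)^{1/r}=L\|t\|_q$, which is the $L_r$ condition \eqref{eq:L-r-intro}. Since $\delta$, $r$ and $L/\gamma$ depend only on $q$, Theorem~\ref{thm:main1} applies with a threshold $c_0=c_0(\alpha,q)$, so that for $N\ge c_0 n$, on an event of probability at least $1-2\exp(-c_1N^{1-\alpha}n^\alpha)$,
\[
{\rm absconv}(X_1,\ldots,X_N)\supset \tfrac12\bigl(K_p(X)\bigr)^\circ,\qquad p=\alpha\log(eN/n).
\]

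Next I would compute $K_p(X)$. Using once more that $\inr{X,t}$ has the distribution of $\|t\|_q\,\xi$ and that $\xi$ is symmetric and atomless, $\PROB(\inr{X,t}\ge 1)=\tfrac12\PROB(|\xi|\ge 1/\|t\|_q)$ for $t\ne 0$, so $t\in K_p(X)$ if and only if $\PROB(|\xi|\ge 1/\|t\|_q)\le 2e^{-p}$. The upper tail bound $\PROB(|\xi|>u)\le C_q u^{-q}$ (valid for all $u>0$) shows that $\|t\|_q\le (2e^{-p}/C_q)^{1/q}$ already forces $t\in K_p(X)$, hence $c_q' e^{-p/q}B_q^n\subset K_p(X)$. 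For the reverse inclusion I would use the lower tail bound $\PROB(|\xi|>u)\ge c_q u^{-q}$ valid for $u\ge u_0(q)$: since $\PROB(|\xi|\ge 1/\|t\|_q)\ge\PROB(|\xi|\ge u_0)>0$ whenever $\|t\|_q\ge 1/u_0$, once $p$ exceeds the constant $p_0(q):=\log\bigl(2/\PROB(|\xi|\ge u_0)\bigr)$ every $t\in K_p(X)$ must satisfy $\|t\|_q<1/u_0$, and then $c_q\|t\|_q^q\le 2e^{-p}$; that is, $K_p(X)\subset c_q'' e^{-p/q}B_q^n$. Because $p=\alpha\log(eN/n)\ge\alpha\log(ec_0)$, enlarging $c_0=c_0(\alpha,q)$ if needed ensures $p\ge p_0(q)$. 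Altogether $K_p(X)\sim_q e^{-p/q}B_q^n$.

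Finally, taking polars reverses inclusions and respects $\sim_q$, and $(B_q^n)^\circ=B_{q'}^n$, so $\bigl(K_p(X)\bigr)^\circ\sim_q e^{p/q}B_{q'}^n$. Since $e^{p/q}=(eN/n)^{\alpha/q}\ge (N/n)^{\alpha/q}$, combining with the first display yields ${\rm absconv}(X_1,\ldots,X_N)\supset c_2(q)(N/n)^{\alpha/q}B_{q'}^n$ on the stated event, which is the assertion; the Cauchy case is $q=1$, $q'=\infty$, and there $c_3$ is an absolute constant. The only step that is not a mechanical application of Theorem~\ref{thm:main1} together with elementary polarity is the identification of $K_p(X)$: the matching two-sided tail estimate $\PROB(|\xi|>u)\asymp_q u^{-q}$ holds only for large $u$, so one must guarantee that the relevant level $1/\|t\|_q$ lies in that range --- this is exactly what forces $p$, equivalently $N/n$, to be bounded below by a $q$-dependent constant, and is the origin of the dependence of $c_0$ on $q$.
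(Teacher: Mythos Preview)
Your proposal is correct and follows essentially the same route as the paper: verify Assumption~\ref{ass:main} for the norm $\|\cdot\|_q$ via the stability property, apply Theorem~\ref{thm:main1}, and identify $K_p(X)$ using the two-sided tail estimate $\PROB(|\xi|>u)\asymp_q u^{-q}$ for large $u$. The only cosmetic differences are that the paper invokes Paley--Zygmund for the small-ball condition (you use the median of $|\xi|$, which is cleaner here), and the paper only proves the one-sided inclusion $K_p(X)\subset c(q)\,e^{-p/q}B_q^n$ that is actually needed for the polar, whereas you also record the matching lower inclusion.
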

Observe that a typical realization of ${\rm absconv}(X_1,\ldots,X_N)$ is much larger than, say, the typical realization of the random polytope generated by the Gaussian random vector. Indeed, the latter only contains $\sqrt{\log(eN/n)}B_2^n$, which is a much smaller set than $c({N}/{n})^{\alpha/q} B_{q^\prime}^n$. The intuitive reason behind this phenomenon is that for $q<2$, a $q$-stable random variable is more `heavy-tailed' than the Gaussian random variable: its tail decay is of the order of $u^{-q}$ rather than $\exp(-u^2/2)$ and that difference leads to the polynomial growth of the ``inner radius" of ${\rm absconv}(X_1,\ldots,X_N)$. At the same time, the difference in the canonical body contained in ${\rm absconv}(X_1,\ldots,X_N)$ is due to the natural metric associated with $X$: each marginal $\inr{t,X}$ is distributed as $\|t\|_q \, \xi $ rather than as $\|t\|_2 \, \xi$.

The proof of Theorem~\ref{thm:stable} is presented in Section~\ref{sec:stable}.

\subsection{Relation to Compressive Sensing} \label{sec:CS}

The second surprising outcome of Theorem~\ref{thm:main1} is related to a fundamental question in the area of \emph{compressive sensing}\footnote{For more information on compressive sensing we refer the reader to \cite{do06-2, CGLP, FoucartRauhut13}, and for more a detailed explanation on the connections between the geometry of random polytopes and sparse recovery, see \cite{DeVoreWojtaszczyk09,Wojtaszczyk10, CGLP, FoucartRauhut13,Foucart14,BA18}.}: can sparse signals be recovered efficiently when the given data consist of a few measurements that are noisy, \emph{but the `noise level' is not known}.

Suppose one would like to recover an unknown vector (signal) $x \in \R^N$ from an underdetermined set of a linear measurements,
i.e., from $y = A x \in \R^n$, where $A \in \R^{n \times N}$ with $n$ much smaller than $N$. While this is impossible
in general, the theory of compressive sensing studies when such recovery is possible by efficient methods
for \emph{($s$-)sparse vectors}, i.e., vectors in $\R^N$ that satisfy $\|x\|_0 = |\{\ell : x_\ell \neq 0\}| \leq s \ll n$.

One of the main achievement of compressive sensing was the discovery that a computationally efficient recovery procedure can be used to recover the signal. Indeed, if $x^\sharp$ is the solution of the $\ell_1$-minimization problem
\begin{equation}\label{l1min-equal}
\min_{z \in \R^N} \|z\|_1 \quad \mbox{ subject to } A z = y,
\end{equation}
then for a well-chosen $\sim s \log(eN/s)$ measurements, $x^\sharp$ coincides with the original $s$-sparse $x$. This upper estimate on the required number of measurements is optimal, and it is attained by a wide variety of random measurement ensembles---for example, if the measurements are $(\inr{G_i,x})_{i=1}^n$, i.e., $A$ is a draw of a random matrix with independent, mean-zero, variance one, Gaussian entries.

Naturally, to be of value in real-life applications, recovery should be possible in the presence of noise. The additional appeal of $\ell_1$-minimization is that it can be modified to perform well even if the given measurements $(\inr{Ax,e_i})_{i=1}^n$ are corrupted by noise, and if the signal $x$ is not necessarily sparse but only approximately sparse in some appropriate sense. Indeed, assume that the data one is given is $\bar{y} = Ax + w$ for $x \in \R^N$ and a vector $w \in \R^n$ of perturbations (noise) \emph{with a known noise level} $\|w\|_2 \leq \eta$. It is important to emphasize that unlike standard problems in statistics, here $w$ is an arbitrary vector, rather than a random draw according to some statistical law.

One can show that for a variety of random matrices, a sample size of $n \sim s \log(eN/s)$ suffices to ensure that
the minimizer $x^\sharp$ of the modified $\ell_1$-minimization problem
\begin{equation}\label{l1min-noise}
\min_{z \in \R^N} \|z\|_1 \quad \mbox{ subject to } \|A z - \bar{y} \|_2 \leq \eta
\end{equation}
satisfies
\begin{equation}\label{error-estimate}
\|x - x^\sharp\|_1 \lesssim \sigma_s(x)_1 + \eta \sqrt{\frac{s}{n}},
\end{equation}
where
$$
\sigma_s(x)_1 = \inf_{z:\|z\|_0\leq s} \|x-z\|_1
$$
is the best $\ell_1$ approximation error of $x$ by an $s$-sparse vector; again, this is the best estimate one can hope for.

Unfortunately, the $\ell_1$-minimization procedure of \eqref{l1min-noise} requires accurate information on the true noise level $\|w\|_2$, or at least a good upper estimate of it. However, in real world applications, this information is often not available.
Getting the noise level wrong renders the estimate \eqref{error-estimate} 	useless: if the employed value of $\eta$ is an underestimation of the true noise level
then the error bound \eqref{error-estimate} need not be valid.
On the other hand, if $\eta$ is chosen to be significantly larger
than the true noise level, the resulting error estimate \eqref{error-estimate} (involving the chosen $\eta$) is terribly loose.

As it happens, one can show that \emph{noise blind recovery}, in which the noise level is not known, is possible if the measurement matrix $A$ satisfies two conditions:
\begin{description}
\item{(1)} A version of the \emph{null space property} (NSP), see \eqref{def:NSP}. We refer the reader to \cite{CGLP, FoucartRauhut13} for a detailed exposition on the NSP.

    Identifying matrices that satisfy the null space property has been of considerable interest in recent years and many examples can be found, for example, in \cite{ALPT11b, CGLP, FoucartRauhut13, krmera14, ML17, DLR}. From our perspective, and somewhat inaccurately put, it is important to note that the NSP is (almost) a necessary condition for sparse recovery in noise-free problems. Therefore, to have any hope of successful recovery in noisy problems, the measurement matrix has to satisfy some version of the NSP.
\item{(2)} The second, and seemingly more restrictive condition is the so-called $\ell_1$ quotient property ~\cite{DeVoreWojtaszczyk09}. The matrix $A$ satisfies the $\ell_1$ quotient property with respect to the norm $\triple{ \ }$ if for every $w \in \R^n$ there exists a vector $v \in \R^N$ such that $A v = w$ and
\begin{equation}\label{l1:quotient:prop}
\|v \|_1 \leq \LL^{-1}  \triple{w}.
\end{equation}
\end{description}
It follows that if $A$ satisfies an appropriate null space property and the $\ell_1$-quotient property, then the solution $x^\sharp$ of \eqref{l1min-equal} for $y = Ax +w$ 
satisfies 
\begin{equation} \label{error-estimate-general}
\|x^\sharp - x\|_1 \lesssim \sigma_s(x)_1 + \triple{w};
\end{equation}
in other words, the noise-blind recovery error depends on `how far' $x$ is from being sparse and on the norm $\triple{w}$ of the noise vector. For the sake of completeness, an outline of the proof of \eqref{error-estimate-general} can be found in Appendix~\ref{Append:SparseRec}.

Theorem \ref{thm:main1} implies that contrary to prior belief, $(2)$ is not restrictive at all; in fact, it is almost universal. Indeed, let $\triple{ \cdot }_{p}$ be the norm whose unit ball is the polar body
$(K_{p}(X))^\circ$, i.e.,
\[
\triple{x}_{p} = \inf \{ t > 0 : x \in t (K_{p}(X))^\circ \}.
\]
Set $A =(X_1| \cdots | X_N)$ to be the random matrix whose columns are independent random draws
of the random vector $X$. Then the inclusion from Theorem~\ref{thm:main1} implies that for each vector $w \in \R^n$ there
exists a vector $v \in \R^N$ such that $A v = w$ and
\begin{equation}
\|v \|_1 \leq c_2^{-1}  \triple{w}_{p},
\end{equation}
which is precisely the $\ell_1$ quotient property with respect to the norm $\triple{\cdot}$.

%\vskip0.3cm
\medskip

Thanks to the study of the floating bodies $K_{p}(X)$
presented in Section~\ref{sec:K-bodies}, the norm $\triple{ \ }_{p}$ can be identified in a variety of cases, and in some of which the appropriate null space property has already been established  -- leading the error bound \eqref{error-estimate-general}. These examples include some of the natural random ensembles that are used in sparse recovery, for example, when $X$ has \iid subgaussian or subexponential  coordinates \cite{ALPT11b,Foucart14}; when $X$ is an isotropic, log-concave random vector \cite{ALPT11b}; and when $X$ has independent coordinates that have $\log(N)$ finite moments \cite{ML17,DLR} (for example, when the coordinates are distributed according to the Student-$t$ distribution with $\sim \log N$ degrees of freedom).

Thanks to Theorem~\ref{thm:main1}, the $\ell_1$ quotient property can be established in those (and many other) cases, implying that noise-blind recovery is possible. To give a flavour of such a result, we present the example of the Student-t distribution in an appendix. More information and numerical experiments are given in \cite{KKR18}.

\section{Proof of the main result}

For the proof of Theorem~\ref{thm:main1}, we need some basic properties of the floating body
\[
K_{p}(X) = \left \{ t \in \R^n, \PROB(\langle X, t \rangle \ge 1) \le e^{-p} \right \}.
\]
Recall that a set $K$ is star-shaped around $0$ if for every $x \in K$ and any $0 \leq \lambda \leq 1$, $\lambda x \in K$.
\begin{Proposition} \label{prop:2.6}
Let $X$ be a symmetric random vector on $\R^n$. Then
\begin{description}
\item{$(1)$} The set $K_{p}(X)$ is star-shaped and symmetric around $0$. Moreover, for any $a > 0$,
\[
a K_p(X) = \left \{ t \in \R^n, \PROB(\langle X, t \rangle \ge a) \le e^{-p} \right \}.
\]
\item{$(2)$} Let $\norm$ be a norm on $\R^n$ and denote its unit ball by ${\cal B} =  {\cal B}_{\norm} = \{t \in \R^n : \|t\| \leq 1\}$. If $X$ satisfies the small-ball condition \eqref{eq:SB-intro} with respect to the norm $\norm$ with constants $\gamma$ and $\delta$, then for $p > \log(2/\delta)$,
\begin{equation} \label{eq:K-and-SB}
\gamma \, {\rm absconv}(K_{p}(X)) \subset {\cal B}.
\end{equation}
\item{$(3)$} If $X$ satisfies the $L_r$ condition \eqref{eq:L-r-intro} with respect to the norm $\norm$ with constant $L$, then
\begin{equation} \label{eq:K-and-ball-inside}
 {\cal B} \subset  L \, \exp(p/r) \, K_{p}(X).
\end{equation}
\end{description}
\end{Proposition}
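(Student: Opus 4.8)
The three parts are essentially direct unwindings of the definitions, so the plan is to treat each in turn. For part $(1)$, I would first observe that symmetry of $X$ gives $\PROB(\inr{X,t}\ge 1)=\PROB(\inr{X,-t}\ge 1)$, so $t\in K_p(X)$ iff $-t\in K_p(X)$; star-shapedness follows because for $0\le\lambda\le 1$ the event $\{\inr{X,\lambda t}\ge 1\}=\{\inr{X,t}\ge 1/\lambda\}$ is contained in $\{\inr{X,t}\ge 1\}$, so its probability only decreases. The scaling identity $aK_p(X)=\{t:\PROB(\inr{X,t}\ge a)\le e^{-p}\}$ is the substitution $t\mapsto at$ inside the definition, using $\{\inr{X,at}\ge 1\}=\{\inr{X,t}\ge 1/a\}$ — wait, more precisely $t\in aK_p(X)$ means $t/a\in K_p(X)$, i.e. $\PROB(\inr{X,t/a}\ge 1)\le e^{-p}$, i.e. $\PROB(\inr{X,t}\ge a)\le e^{-p}$.

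For part $(3)$, I would use Markov's inequality on the $r$-th moment: for any $t\in\R^n$,
\[
\PROB(|\inr{X,t}|\ge a)\le \frac{\E|\inr{X,t}|^r}{a^r}\le \frac{L^r\|t\|^r}{a^r}.
\]
Now take $t\in{\cal B}$, so $\|t\|\le 1$, and set $a=L\exp(p/r)$; then the right-hand side is at most $L^r/a^r=\exp(-p)$, hence $\PROB(\inr{X,t}\ge a)\le\PROB(|\inr{X,t}|\ge a)\le e^{-p}$, which by the scaling identity of part $(1)$ says exactly $t\in aK_p(X)=L\exp(p/r)K_p(X)$. That gives ${\cal B}\subset L\exp(p/r)K_p(X)$.

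Part $(2)$ is the one requiring a genuine (if short) argument, and I expect it to be the main obstacle. The point is to go from a pointwise small-ball bound to control of the \emph{absolute convex hull} of $K_p(X)$, not just of $K_p(X)$ itself. I would argue by contraposition on the dual side: suppose $t\notin\gamma^{-1}{\cal B}$, i.e. $\gamma\|t\|>1$; then the small-ball condition \eqref{eq:SB-intro} gives $\PROB(|\inr{X,t}|\ge 1)\ge\PROB(|\inr{X,t}|\ge\gamma\|t\|)\ge\delta$, and by symmetry of $X$ one of $\PROB(\inr{X,t}\ge 1)$, $\PROB(\inr{X,-t}\ge 1)$ is at least $\delta/2$. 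For $p>\log(2/\delta)$ we have $\delta/2>e^{-p}$, so at least one of $\pm t$ fails the defining inequality of $K_p(X)$, hence $t\notin\absconv(K_p(X))$ (since $\absconv(K_p(X))$ is symmetric and $t\in K_p(X)$ fails as does $-t$). Contrapositive: $\absconv(K_p(X))\subset\gamma^{-1}{\cal B}$, i.e. $\gamma\,\absconv(K_p(X))\subset{\cal B}$. The one subtlety to be careful about is that membership of $t$ in $\absconv(K_p(X))$ should be ruled out using that $K_p(X)$ is star-shaped and symmetric, so that if a ray direction already leaves $K_p(X)$ at scale $1$ then no convex combination of $\pm$ points of $K_p(X)$ can reach $t$; this is where the precise geometry of $K_p(X)$ from part $(1)$ is used, and it is worth spelling out rather than asserting.
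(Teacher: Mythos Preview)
Parts $(1)$ and $(3)$ are correct and match the paper's argument essentially verbatim.

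For part $(2)$ your computation is right but the logical wrapping is confused, and the fix you propose will not work. What your contraposition actually establishes is that every $t$ with $\|t\|>1/\gamma$ lies outside $K_p(X)$, i.e.\ $K_p(X)\subset\gamma^{-1}{\cal B}$. From there the conclusion is immediate, because the \emph{target} $\gamma^{-1}{\cal B}$ is already convex and centrally symmetric: any set contained in a convex symmetric set has its absolute convex hull contained there as well, so $\absconv(K_p(X))\subset\gamma^{-1}{\cal B}$. This is exactly the paper's one-line reduction (``by convexity, it is enough to show that $\gamma K_p(X)\subset{\cal B}$'').

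Your suggested route---using that $K_p(X)$ is star-shaped and symmetric to argue that $\pm t\notin K_p(X)$ forces $t\notin\absconv(K_p(X))$---is a dead end. Star-shapedness plus central symmetry does not imply that a set equals its own absolute convex hull: the cross $[-e_1,e_1]\cup[-e_2,e_2]\subset\R^2$ is star-shaped and symmetric, yet its absolute convex hull is all of $B_1^2$, so e.g.\ $(1/2,1/2)\in\absconv$ of the cross while $\pm(1/2,1/2)$ are not in the cross. There is thus no way to deduce $t\notin\absconv(K_p(X))$ from $\pm t\notin K_p(X)$ using only part $(1)$. The point is that the convexity you need sits on the ${\cal B}$ side, not on the $K_p(X)$ side.
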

\noindent{\bf Proof.}
The first  observation is straightforward.
To prove \eqref{eq:K-and-SB} observe that by convexity, it is enough to show that $ \gamma K_{p}(X) \subset {\cal B}$. But if $\|t\| \geq 1$ then the small-ball condition and the symmetry of $X$ imply that
$$
\PROB(\inr{X,t} \geq \gamma) \geq \PROB(\inr{X,t} \geq \gamma \|t\|) \geq \frac{\delta}{2} > \exp(-p),
$$
provided that $\delta > 2\exp(-p)$, as was assumed. Hence, $t \notin \gamma K_p(X)$.

As for \eqref{eq:K-and-ball-inside}, note that for $\|u\|\leq 1$, the $L_r$ condition yields that $\E |\inr{X,u}|^r \leq L^r$ and thus by Markov's inequality
$$
\PROB(\inr{X,u} \geq L \exp(p/r)) \leq \frac{\E |\inr{X,u}|^r}{L^r} \exp(-p) \leq \exp(-p),
$$
hence, $u \in L \exp(p/r) K_p(X)$.
\endproof

An outcome of Proposition \ref{prop:2.6} is that if $X$ satisfies Assumption \ref{ass:main} and
\begin{equation}\label{p:lowerbound}
p > \log(2/\delta),
\end{equation}
then $K_{p}(X)$ is a centrally symmetric subset of $\R^n$ that is star-shaped around $0$ and for which
\begin{equation} \label{eq:K-prop-1}
( 1 /  L) \, \exp(-p/r) \, {\cal B} \subset  K_{p}(X) \subset (1/\gamma) {\cal B}.
\end{equation}
Let ${\cal S}$ be the unit sphere of $(\R^n, \norm)$. For  $\theta \in {\cal S}$ set
$$
r(\theta) = \sup\{ \beta \geq 0 : \beta \theta \in  K_{p}(X)\}
$$
and note that by \eqref{eq:K-prop-1}, $(1 /  L) \, \exp(-p/r) \leq r(\theta) \leq 1/\gamma$. With a possible abuse of notation, put
\begin{equation}\label{def:partialK}
\partial K_p(X) = \{ r(\theta)\theta : \theta \in {\cal S} \}.
\end{equation}
Note that $\partial K_p(X)$ may not coincide with the topological boundary of $K_p(X)$ as $\theta \mapsto r(\theta)$ need not be continuous on ${\cal S}$ for general $X$.

\begin{Corollary} \label{cor:boundary-points}
For every $\theta \in {\cal S}$,
$$
\PROB( \inr{X,r(\theta) \theta} \geq 1) \geq \exp(-p).
$$
\end{Corollary}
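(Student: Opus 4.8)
The plan is to exploit the definition of $r(\theta)$ as a supremum together with continuity of the probability measure along a monotone sequence of half-lines. Fix $\theta \in {\cal S}$. By \eqref{eq:K-prop-1} we have $0 < (1/L)\exp(-p/r) \le r(\theta) \le 1/\gamma < \infty$, so $r(\theta)$ is a well-defined positive real number. By the definition of the supremum, for every $\beta > r(\theta)$ the point $\beta\theta$ does not belong to $K_p(X)$, which by the defining property of the floating body means $\PROB(\inr{X,\beta\theta} \ge 1) > e^{-p}$. Using linearity of the inner product (equivalently, part $(1)$ of Proposition~\ref{prop:2.6}), this says $\PROB(\inr{X,\theta} \ge 1/\beta) > e^{-p}$.

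Next I would choose a strictly decreasing sequence $\beta_k \downarrow r(\theta)$ with $\beta_k > r(\theta)$ for all $k$. Then the events $E_k := \{\inr{X,\theta} \ge 1/\beta_k\}$ form a decreasing sequence, since $1/\beta_k$ increases to $1/r(\theta)$, and $\bigcap_{k} E_k = \{\inr{X,\theta} \ge 1/r(\theta)\}$. By continuity of the probability measure from above,
$$
\PROB\bigl(\inr{X,\theta} \ge 1/r(\theta)\bigr) \;=\; \lim_{k\to\infty}\PROB(E_k) \;\ge\; e^{-p}.
$$
Rescaling once more, $\{\inr{X,\theta} \ge 1/r(\theta)\} = \{\inr{X,r(\theta)\theta} \ge 1\}$, so $\PROB(\inr{X,r(\theta)\theta} \ge 1) \ge e^{-p}$, which is the claim.

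I do not expect a serious obstacle here. The only points requiring care are the direction of monotonicity---$\beta_k$ decreasing forces the thresholds $1/\beta_k$ to increase and hence the half-line events $E_k$ to shrink, so one must invoke continuity of measure from above rather than from below---and the fact that $r(\theta)>0$, which is guaranteed by the $L_r$ part of Assumption~\ref{ass:main} through \eqref{eq:K-prop-1} and makes division by $r(\theta)$ legitimate. In particular, the argument uses nothing about the map $\theta \mapsto r(\theta)$ beyond its pointwise value, and so is unaffected by the possible discontinuity of $r(\cdot)$ noted after \eqref{def:partialK}.
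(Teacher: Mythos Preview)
Your proof is correct and follows essentially the same approach as the paper: both argue that $\beta\theta \notin K_p(X)$ for every $\beta > r(\theta)$ and then pass to the limit via continuity of measure along the decreasing family of events. The paper phrases this more tersely as ``taking the intersection of these events for any $\rho>1$'' (with $\beta = \rho\, r(\theta)$), while you spell out the monotone sequence and continuity-from-above explicitly, and add the observation that $r(\theta)>0$ justifies the rescaling; these are expository rather than substantive differences.
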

\noindent{\bf Proof.}
It follows from the definition of $r(\theta)$ that for any $\rho > 1$, $\rho r(\theta) \theta \not \in  K_{p}(X)$, and thus,
$$
\PROB( \inr{X,\rho r(\theta) \theta} \geq 1) > \exp(-p).
$$
Taking the intersection of these events for any $\rho > 1$ gives the result.
\endproof

\medskip
The proof of Theorem~\ref{thm:main1} follows the path set in the (much simpler) proof of Theorem~1.5 from \cite{Men}. The goal is to show that if $X$ satisfies Assumption \ref{ass:main}, and
$$
N \geq c_0(\alpha,\delta, r, L/\gamma)\, n, \ \ \ p=\alpha \log(eN/n),
$$
then with probability at least
$$
1-2\exp(-c_1N^{1-\alpha}n^\alpha),
$$
one has that
\begin{equation}
\label{eq:inclusion}
\frac{1}{2} (K_{p}(X))^\circ \subset {\rm absconv}(X_1,\ldots,X_N).
\end{equation}
For a symmetric convex body $U$ with a nonempty interior,
define its support function $h_U$  by
$$
h_U(t) = \sup_{u \in U} \inr{u,t}, \ \ \ {\rm for \ all \ } t \in \R^n.
$$
The inclusion \eqref{eq:K-prop-1} ensures that $(K_{p}(X))^\circ$ has nonempty interior. Therefore, \eqref{eq:inclusion} is equivalent to
\[
\frac{1}{2} h_{(K_{p}(X))^\circ}(t) \le h_{{\rm absconv}(X_1,\ldots,X_N)}(t)
\]
for every $t \in \R^n$; and the negation of this event is that there exists $t \in \R^n$ such that
\begin{equation}
\label{eq:negation}
\frac{1}{2} \sup_{u \in (K_{p}(X))^\circ} \inr{u,t} > \sup_{v \in {\rm absconv}(X_1,\ldots,X_N)} \inr{v,t}.
\end{equation}
By homogeneity of \eqref{eq:negation} and since $\theta \mapsto r(\theta)$ is bounded away from $0$ on ${\cal S}$, it suffices to show that there is $t \in \partial K_p(X)$ for which  \eqref{eq:negation} holds.
Denote by $\Gamma : \R^n \to \R^N$  the random matrix whose rows are $X_1,\ldots,X_N$.  Observe that ${\rm absconv}(X_1,\ldots,X_N) = \Gamma^* B_1^N$, and therefore
\begin{equation}\label{Gamma-infty}
\sup_{u \in \Gamma^* B_1^N} \inr{u,t} = \sup_{x \in B_1^N} \inr{x,\Gamma t} = \|\Gamma t\|_\infty.
\end{equation}
Moreover, for $t \in \partial K_p(X)$, the definition of polarity gives $\sup_{u \in (K_{p}(X))^\circ} \inr{u,t} \le 1$.
Hence, for the proof of Theorem~\ref{thm:main1} it remains to show that
\begin{equation} \label{eq:what-to-prove}
\PROB \left( 
\inf_{t \in \partial K_{p}(X)} \|\Gamma t\|_\infty \leq 1/2 \right) \leq 2\exp(-c_1 \, N^{1-\alpha}n^\alpha).
\end{equation}

The proof of \eqref{eq:what-to-prove} is based on the small-ball method (see, for example, \cite{MendelsonLearning15}). First, fix any $t \in \partial K_{p}(X)$ and recall that by Corollary \ref{cor:boundary-points},
$$
\PROB(\inr{X,t} \geq 1) \geq \exp(-p).
$$
Therefore, by independence of the $X_i$ and Chernoff's inequality,
with probability at least
\begin{equation} \label{eq:in-proof-what-1}
1-\exp(-N \exp(-p)/8),
\end{equation}
it holds that
\begin{equation}\label{count:large-coord}
|\{i : \inr{X_i,t} \geq 1 \}| \geq \frac{N}{2}\exp(-p).
\end{equation}

Second, thanks to the high probability estimate \eqref{eq:in-proof-what-1}, it follows from the union bound
that if $T \subset \partial K_{p}(X)$ with
\begin{equation}\label{T-card}
|T| \leq \exp(N\exp(-p)/16),
\end{equation}
then
\begin{equation} \label{eq:uniform-est}
\inf_{t \in T} |\{i : \inr{X_i,t} \geq 1 \}| \geq \frac{N}{2}\exp(-p).
\end{equation}
with probability at least
\[
1-\exp(- N\exp(-p) / 16).
\]
The only restriction on the set $T \subset \partial K_{p}(X)$ is its cardinality. With this in mind, we will define  $T$ as a covering of $\partial K_{p}(X)$ with balls of  appropriate radius associated to the norm $\norm$.

Observe that by \eqref{eq:K-and-SB}, $\partial K_{p}(X) \subset (1/\gamma) {\cal B}$ provided that $p > \log(2/\delta)$.
By a standard volumetric estimate, see, e.g., \cite[Proposition~C.3]{FoucartRauhut13}, for every $\rho>0$ there exists a $\eta/\gamma$-cover of $\partial K_{p}(X)$
with respect to the norm $\norm$ of cardinality at most $(1 + 2/\eta)^n$. 
This $\eta/\gamma$-cover has the required cardinality \eqref{T-card} if 
\begin{equation} \label{eq:cond-on-eta}
\eta \geq 2 \left(\exp\left(\frac{N}{16n} \exp(-p)\right) - 1 \right)^{-1}.
\end{equation}
If
\begin{equation}\label{N:require}
N \geq 16 \ln(2) \exp(p) \, n
\end{equation}
then \eqref{eq:cond-on-eta} is satisfied for the choice
\begin{equation}\label{def:eta}
\eta = 4 \exp\left(-\frac{N}{16n} \exp(-p)\right).
\end{equation}
Denoting by ${\cal A}_1$ the event on which \eqref{eq:uniform-est} holds for $T$ that is a minimal $\eta/\gamma$-cover of $\partial K_p(X)$, it is evident that
$$
\PROB({\cal A}_1) \geq 1-\exp(- N\exp(-p)/16).
$$

Finally, for every $t \in \partial K_{p}(X)$ let $\pi t \in T$ be the nearest element to $t$ in the $(\eta / \gamma)$-cover with respect to the norm $\norm$. Consider the event ${\cal A}_2$ on which
\begin{equation} \label{eq:sup-process}
\sup_{t \in T} |\{i : |\inr{X_i,t-\pi t}| \geq 1/2\}| \leq \frac{3N}{8}\exp(-p).
\end{equation}
For each $t \in \partial K_p(X)$ consider the sets of indices
\[
I_1(t) := \{i : \inr{X_i, \pi t} \geq 1 \}, \quad I_2(t) := \{ i : |\inr{X_i,t-\pi t}| \geq 1/2\}.
\]
and observe that on the event ${\cal A}={\cal A}_1 \cap {\cal A}_2$,
\[
| I_1(t)| \geq \frac{N}{2}\exp(-p), \quad | I_2(t)| \leq \frac{3N}{8}\exp(-p).
\]
Clearly,
$$
| I_1(t)| + |I_2^c(t)| \geq \frac{N}{2}\exp(-p) + (N - \frac{3N}{8}\exp(-p)) = N + \frac{N}{8}\exp(-p)),
$$
and therefore
\[
|I_1(t) \cap I_2^c(t)| \geq  \frac{N}{8}\exp(-p).
\]
For each $t \in I(t) := I_1(t) \cap I_2^c(t)$ the triangle inequality gives 
$$
\inr{X_i,t} \geq \inr{X_i,\pi t} - |\inr{X_i,t-\pi t}| \geq \frac{1}{2}.
$$
In particular, on the event ${\cal A}$, it holds that
$\inf_{t \in \partial K_{p}(X)} \|\Gamma t\|_\infty \geq \frac{1}{2}$
and
$$
\PROB \left( \inf_{t \in \partial K_{p}(X)} \|\Gamma t\|_\infty \leq 1/2 \right) \leq \PROB\bigl( {\cal A}^c \bigr).
$$
Finally, let us show that $\PROB({\cal A}_2)$ is `large enough' for the right choice of $p$. To that end, observe that for every
$t \in \partial K_{p}(X)$, $\|t-\pi t\| \leq (\eta / \gamma)$, and therefore,
\begin{align*}
\sup_{t \in \partial K_{p}(X)} |\{i : |\inr{X_i,t-\pi t}| \geq 1/2\}| & \leq \sup_{u \in (\eta / \gamma) {\cal B}} |\{i : |\inr{X_i,u}| \geq 1/2\}|\\
& = \sup_{u \in (\eta / \gamma) {\cal B}} \sum_{i=1}^N \IND_{\{ |\inr{X_i,u}| \geq 1/2\}},
\end{align*}
which is the supremum of an empirical process indexed by the class of indicator functions
\begin{equation}\label{def:FInd}
\FF= \bigl\{ \IND_{\{ |\inr{\cdot,u}| \geq 1/2\}} : u \in (\eta / \gamma) {\cal B}\bigr\}.
\end{equation}
The wanted estimate on this supremum is based on an outcome of Talagrand's concentration inequality for bounded empirical processes, in the special case in which the indexing class is binary-valued and has
a finite Vapnik-Chervonenkis (VC) dimension (for a definition of the VC dimension, see, e.g., \cite{VC}).

Before stating this result, let us first recall the definition of VC dimension and a basic bound needed in our proof.

\begin{Definition} \label{def:VC}
Let $\FF$ be a class of $\{0,1\}$-valued functions on a space $\Omega$. The class shatters $\{x_1,\hdots,x_k\} \subset
\Omega$, if for every $I \subset \{1,\ldots,k \}$ there exists a function
$f_I \in \FF$ for which $f_I(x_i) =1$ if $i \in I$ and $f_I(x_i)=0$
if $ i \not \in I$. Let
\begin{equation*}
VC(\FF)=\sup \left\{ |A| \ : \  A \subset \Omega, \ A \
{\rm is \ shattered \ by} \ \FF \right\}.
\end{equation*}
\end{Definition}

\begin{Lemma}\label{VC-union} Let ${\cal D}$ be a set of subsets of $\Omega$ such that the set of indicator functions
$\FF = \{ \IND_{D} : D \subset {\cal D}\}$ satisfies $VC(\FF) = d$. If $\widetilde{\FF} = \{ \IND_{D \cup D'} : D, D' \in \mathcal{D}\}$ then $VC(\widetilde{\FF}) < 10 d$.
\end{Lemma}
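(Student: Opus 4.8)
The plan is to bound the shatter coefficient (growth function) of $\widetilde{\FF}$ in terms of that of $\FF$ and then invoke the Sauer--Shelah lemma together with its converse. First I would recall the Sauer--Shelah bound: since $VC(\FF) = d$, for every finite $A \subset \Omega$ with $|A| = m$ the number of distinct sets of the form $D \cap A$ with $D \in \mathcal{D}$ is at most $\sum_{k=0}^{d} \binom{m}{k} \le (em/d)^d$ (and the cruder bound $\le m^d + 1 \le (m+1)^d$ works for $m \ge d$). Now fix $A \subset \Omega$ with $|A| = m$. Every trace $(D \cup D') \cap A = (D \cap A) \cup (D' \cap A)$ is determined by the pair of traces $(D \cap A, D' \cap A)$, so the number of distinct sets of the form $(D\cup D')\cap A$ is at most the square of the number of traces of $\mathcal D$ on $A$; that is, the growth function of $\widetilde\FF$ satisfies $\Pi_{\widetilde\FF}(m) \le \Pi_\FF(m)^2 \le \left(\sum_{k=0}^d \binom{m}{k}\right)^2$.

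Next I would use the standard fact that a class with VC dimension $D$ shatters some set of size $D$, hence its growth function at $m=D$ equals $2^D$. Setting $D = VC(\widetilde\FF)$ and $m = D$ in the previous display gives
\[
2^{D} \;\le\; \Bigl(\sum_{k=0}^{d}\binom{D}{k}\Bigr)^{2}.
\]
It remains to show this forces $D < 10d$. Using $\sum_{k=0}^d \binom{D}{k} \le (eD/d)^d$ for $D \ge d$ (and noting $D \ge d$ is immediate since $\widetilde\FF \supset \FF$ up to the trivial inclusion $D = D\cup D$), the inequality becomes $2^{D} \le (eD/d)^{2d}$, i.e.\ $(D/d)\log 2 \le 2\log(eD/d)$. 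Writing $x = D/d \ge 1$, this reads $x \log 2 \le 2 + 2\log x$, and a direct check (the function $x \log 2 - 2\log x$ is increasing for $x$ large and exceeds $2$ at $x = 10$, since $10\log 2 \approx 6.93 > 2 + 2\log 10 \approx 6.60$) shows every solution satisfies $x < 10$, hence $D < 10 d$.

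The only mild subtlety — and the step I would be most careful about — is the elementary estimate at the end: one must verify the scalar inequality $x\log 2 \le 2 + 2\log x$ has no solution with $x \ge 10$, which follows because the left side grows linearly while the right side grows logarithmically and the two already cross below $x=10$; monotonicity of $x\log 2 - 2\log x$ on $[x_0,\infty)$ for the relevant range makes this rigorous. Everything else is a routine combination of Sauer--Shelah, the product bound on traces of unions, and the defining property that a maximal shattered set witnesses the VC dimension. (If one prefers to avoid the closed-form $(eD/d)^d$ bound, the same conclusion follows by instead using $\sum_{k=0}^d\binom{D}{k}\le D^d+1$ for $D\ge 1$ and checking $2^D \le (D^d+1)^2$ fails for $D \ge 10d$, $d\ge 1$.)
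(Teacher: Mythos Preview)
Your proof is correct and is essentially the same as the paper's: the paper cites \cite[Lemma 3.2.3]{Blumer:1989tt} for the general $k$-fold union bound and then remarks that inspecting that proof shows any $m$ with $(em/d)^{2d} < 2^m$ is a strict upper bound for $VC(\widetilde\FF)$, with $m=10d$ a valid choice---which is precisely the inequality you derive via Sauer--Shelah and the trace-squaring argument. In effect you have written out the ``inspection of the proof'' that the paper only alludes to.
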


\begin{proof} The statement is a special case of \cite[Lemma 3.2.3]{Blumer:1989tt}, which treats the case of the class of $k$ unions, i.e.,
${\cal D}^k = \{ D_1 \cup \cdots \cup D_k : D_1,\hdots,D_k \in {\cal D}\}$, and states that the VC dimension of the corresponding
class $\FF^k$ of indicator functions satisfies $VC(\FF^k) < 2d k \log_2(3k)$. For $k=2$ one has  $VC(\widetilde{\FF}) = c d$ with
$c = 4 \log_2(6) \approx 10.34$. The slightly better constant $10$ (or even $9.4$) follows from an inspection of the proof, which
shows that a strict upper bound for the VC dimension of $\widetilde{\FF}$ is any $m$ such that $(em/d)^{2d} < 2^m$. An explicit calculation
shows that $m=10d$ is a valid choice.
\end{proof}

Let us now state the outcome of Talagrand's concentration inequality when the indexing set of functions is a VC class (see \cite{ta94} and also \cite[Lemma 3.7]{ma03}).

\begin{Theorem} \label{thm:talagrand} 
Let $\FF$ be a class of $\{0,1\}$-valued functions for which $VC(\FF) \leq d$ and $\sup_{f \in \FF} \E f^2 \leq \sigma^2$.
Set
\begin{equation}\label{def:R}
R := 64 \frac{d}{N} \log\left(\frac{c}{\sigma^2}\right) + 8 \sigma \sqrt{\frac{d}{N} \log\left( \frac{c}{\sigma^2} \right)},
\end{equation}
where $c = 8e^2\sqrt{2} \approx 83.6$.
Then for any $x>0$,
\[
\PROB\left(\sup_{f \in \FF} \left|\frac{1}{N}\sum_{i=1}^N f(X_i) -\E f \right| \geq R + x\right) \leq
\exp\left(-N \frac{x^2/2}{\sigma^2 + 2R + x/3}\right).
\]
\end{Theorem}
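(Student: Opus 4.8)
The plan is to derive Theorem~\ref{thm:talagrand} by combining a concentration-around-the-mean bound with an explicit estimate for the mean itself. Write $Z_N := \sup_{f\in\FF}\bigl|\tfrac1N\sum_{i=1}^N f(X_i)-\E f\bigr|$; the goal is to show (i) that $Z_N$ concentrates around $\E Z_N$ with Bernstein-type tails of variance proxy $\sigma^2+2\E Z_N$ and scale $1/(3N)$, and (ii) that $\E Z_N\le R$, after which monotonicity of the resulting bound in $\E Z_N$ lets us replace $\E Z_N$ by $R$ everywhere. Two reductions make the set-up harmless: since $\FF$ is a $\VC$ class one may pass to a countable subclass without changing $Z_N$ almost surely, and since the supremum defining $Z_N$ is two-sided one works with the symmetrized class $\mathcal G=\FF\cup(-\FF)$, which is $[-1,1]$-valued, satisfies $\sup_{g\in\mathcal G}\var(g)\le\sup_{g\in\mathcal G}\E g^2\le\sigma^2$, has $L_2$-covering numbers at most twice those of $\FF$, and for which $Z_N=\sup_{g\in\mathcal G}\bigl(\tfrac1N\sum_{i=1}^N g(X_i)-\E g\bigr)$ is a genuine one-sided supremum.

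\emph{Concentration.} For the first ingredient I would invoke Bousquet's form of Talagrand's inequality for the one-sided supremum $Z:=N Z_N=\sup_{g\in\mathcal G}\sum_{i=1}^N(g(X_i)-\E g)$ of an empirical process indexed by a bounded countable class: with $v:=N\sigma^2+2\E Z$,
\[
\PROB\bigl(Z\ge \E Z+u\bigr)\ \le\ \exp\!\Bigl(-\frac{u^2}{2(v+u/3)}\Bigr),\qquad u>0.
\]
Putting $u=Nx$, $Z=NZ_N$, $\E Z=N\,\E Z_N$ and cancelling a factor $N$ from numerator and denominator turns the exponent into $-N\,\dfrac{x^2/2}{\sigma^2+2\E Z_N+x/3}$. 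Since this bound is increasing in $\E Z_N$ and since $\{Z_N\ge R+x\}\subset\{Z_N\ge \E Z_N+x\}$ once $\E Z_N\le R$, the probability in Theorem~\ref{thm:talagrand} is then at most $\exp\!\bigl(-N\,\tfrac{x^2/2}{\sigma^2+2R+x/3}\bigr)$, as claimed. (In place of Bousquet one may equally quote the corresponding displays of \cite{ta94} or \cite{ma03}.)

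\emph{Bounding the mean.} This is the substantive step. By the symmetrization inequality, $\E Z_N\le \tfrac{2}{N}\,\E\sup_{g\in\mathcal G}\bigl|\sum_{i=1}^N\eps_i g(X_i)\bigr|$ for independent Rademacher signs $\eps_i$. Conditionally on $(X_i)_{i\le N}$ the Rademacher process is sub-Gaussian for the empirical metric $d_N(g,h)^2=\tfrac1N\sum_i(g-h)^2(X_i)$, so Dudley's entropy integral gives
\[
\E_{\eps}\sup_{g\in\mathcal G}\Bigl|\tfrac1N\sum_{i=1}^N\eps_i g(X_i)\Bigr|\ \lesssim\ \frac{1}{\sqrt N}\int_0^{D_N}\!\sqrt{\log \cN(\mathcal G,d_N,t)}\;dt,\qquad D_N^2:=\sup_{g\in\mathcal G}\tfrac1N\sum_{i=1}^N g(X_i)^2.
\]
For a $\VC$ class of dimension $d$, Haussler's bound gives $\cN(\mathcal G,d_N,t)\lesssim (d{+}1)(e/t^2)^{d}$, whence $\sqrt{\log\cN(\mathcal G,d_N,t)}\lesssim \sqrt d\,\sqrt{\log(3/t^2)}$ and $\int_0^{D_N}\sqrt{d\log(3/t^2)}\,dt\lesssim D_N\sqrt{d\,\log(c/D_N^2)}$. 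The random radius $D_N$ is then controlled by $D_N^2\le\sigma^2+\sup_{g\in\mathcal G}\bigl|\tfrac1N\sum_i g(X_i)^2-\E g^2\bigr|$, and the last supremum is bounded by a further application of symmetrization together with the contraction principle (as $x\mapsto x^2$ is $2$-Lipschitz on $[-1,1]$), which expresses it again in terms of $\E Z_N$. Taking expectations and solving the resulting self-referential quadratic inequality for $\sqrt{\E Z_N}$ yields a bound of the exact form $\E Z_N\le 64\,\tfrac dN\log(c/\sigma^2)+8\sigma\sqrt{\tfrac dN\log(c/\sigma^2)}=R$ once the absolute constants coming from Haussler's bound, Dudley's integral and the quadratic are tracked; this explicit computation is essentially the content of \cite[Lemma~3.7]{ma03}.

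\emph{Main obstacle.} The concentration step is essentially off the shelf; the real work, and the place where one must be careful, is the explicit-constant bound $\E Z_N\le R$. The two delicate points there are: replacing the data-dependent radius $D_N$ and covering numbers by the deterministic $\sigma^2$ --- this is what forces the self-referential quadratic and is precisely what produces both a $\sigma\sqrt{d/N}$ term and a $d/N$ term in $R$ --- and the bookkeeping of the numerical constants through Haussler's bound, the entropy integral and the quadratic so as to land on exactly $64$, $8$ and $c=8e^2\sqrt2$. If one is content to cite the packaged estimate from \cite{ta94,ma03}, what remains is only to verify the $\VC$ hypothesis (which is given) and to record the resulting form of $R$.
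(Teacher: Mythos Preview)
Your proposal is correct and follows the same overall architecture as the paper's proof: Bousquet's version of Talagrand's inequality for the concentration step, then symmetrization, Dudley's entropy integral, Haussler's covering-number bound, and finally a self-referential quadratic in $\E Z_N$ to obtain $\E Z_N\le R$.

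The one place where the paper differs from your sketch is in controlling the random radius $D_N$. You propose to bound $D_N^2\le \sigma^2+\sup_g\bigl|\tfrac1N\sum_i g(X_i)^2-\E g^2\bigr|$ and then close the loop via symmetrization and the contraction principle for $x\mapsto x^2$. The paper instead exploits that the functions are $\{0,1\}$-valued, so $f^2=f$ and hence $D_N^2=\sup_{f\in\FF}\tfrac1N\sum_i f(X_i)\le Z_N+\sup_{f\in\FF}\E f\le Z_N+\sigma^2$ \emph{pointwise}, with no contraction needed. This is precisely the simplification the paper flags (``avoiding the use of \cite[Lemma~3.6]{ma03}''). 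Your route works in principle but introduces extra factors from contraction that make it harder to land on exactly the constants $64$, $8$ and $c=8e^2\sqrt{2}$; the paper's $f^2=f$ shortcut is what makes the stated constants fall out cleanly from the quadratic $(\E Z)^2\le Q^2(\E Z+N\sigma^2)$ with $Q=8\sqrt{d\log(c/\sigma^2)}$.
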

For the sake of completeness, we provide a sketch of the argument in Appendix~\ref{Appendix:VC}.

\medskip

Let us return to the proof of Theorem~\ref{thm:main1} and  consider $\FF$ as defined in \eqref{def:FInd}. Each $f \in \FF$
is the indicator of a union of two half spaces in $\R^n$. By Radon's theorem,
the VC dimension of the class of indicators of half spaces in $\R^n$ is $n+1$, see e.g.~\cite[Theorem 3.4]{Mohri:2012tb}.
It follows from Lemma~\ref{VC-union} that $d:=VC(\FF) < 10(n+1)$.
Moreover, by the $L_r$  condition from Assumption \ref{ass:main} and Markov's inequality, for any $\theta  \in [0,1]$,
$$
\sup_{f \in \FF} \E f^2 = \sup_{u \in (\eta / \gamma) {\cal B}} \PROB\left( |\inr{X,u}| \geq \frac{1}{2}\right) \leq \min\left\{1,\left(\frac{2L\eta}{\gamma}\right)^r \right\}
\leq \min\left\{1,\left(\frac{2L\eta}{\gamma}\right)^{\theta r}\right\}.
$$
Hence, any $\sigma^2 \geq (2L\eta/ \gamma)^{ \theta r}$ is a valid choice in the context of Theorem~\ref{thm:talagrand}.
By our choice of $\eta$ in \eqref{def:eta}, this requirement is fulfilled for
\[
\sigma^2 = 
\left(\max\{1,8L/\gamma\} \right)^{\theta r} \exp\left( - \theta r \frac{N}{16n} \exp(-p) \right).
\]
With that choice of $\sigma$ and $p=\alpha \log(e N/n)$ the first term in the definition \eqref{def:R} of $R$
can be bounded as
\begin{align*}
T_1:= \frac{64d}{N}\log\left(\frac{c}{\sigma}\right) & < \frac{640(n+1)}{N}\left(\log(c)+\theta r\left(-
\log(\max\{1,8L/\gamma\}) + \frac{N}{16n} \exp(-p)\right) \right) \\
& \leq \frac{640\log(c)(n+1)}{N} + \frac{40(n+1)}{n} \theta r
\left(\frac{n}{eN}\right)^{\alpha}.
\end{align*}
Choosing $\theta = c_1 \min\{1,1/r\}$ with $c_1 = 1/(16\cdot 80)$ and assuming
$N \geq c_2 n$ for a suitable constant $c_2=c_2(\alpha)$, it is evident that  $\frac{640\log(c)(n+1)}{N} \leq  \left(\frac{n}{eN}\right)^\alpha/16$ and therefore,
\begin{equation}\label{T2:bound}
T_1 \leq \frac{1}{8}  \left(\frac{n}{eN}\right)^\alpha = \frac{1}{8} \exp(-p).
\end{equation}
Also, under the same assumptions, the second term in the definition \eqref{def:R} of $R$ can be estimated using \eqref{T2:bound} as
\begin{align*}
T_2 & := 8 \sigma \sqrt{\frac{d}{N} \log\left( \frac{c}{\sigma^2} \right)} \leq
8\sigma \sqrt{\frac{1}{8\cdot 64}\exp(-p)}\\
& \leq (\max\{1,8L/\gamma\})^{\theta r/2} \exp\left( - \theta r \frac{N}{32n} \exp(-p) \right) \sqrt{\frac{1}{8}\exp(-p)}\\
& = \frac{(\max\{1,8L/\gamma\})^{c_1\frac{\min\{1,r\}}{2}}}{\sqrt{8}}
\exp\left(- c_1\frac{\min\{1,r\}}{32e^\alpha}\left(\frac{N}{n}\right)^{1-\alpha} - \frac{\alpha}{2} \ln(eN/n)\right) \notag \\
&\leq \frac{1}{8} \exp(-\alpha \ln(eN/n)) = \frac{1}{8} \exp(-p), \label{T1:bound}
\end{align*}
provided that $N \geq c_3 n$ for some suitable $c_3 = c_3(\alpha,r,L/\gamma)$. Combining the two estimates, it follows that
\[
R = T_1 + T_2 \leq \frac{1}{4} \exp(-p).
\]
Moreover, with a similar argument we have that
\[
\sigma \leq \frac{1}{16}\exp(-p)
\]
provided that $N \geq c_4 n$ with $c_4 = c_4(\alpha,r,L/\gamma)$; furthermore,
\[
\sup_{f \in \FF} \E f \leq \sup_{f \in \FF} (\E f^2 )^{1/2} \leq \sigma \leq \frac{1}{16}\exp(-p).
\]
Now, recall that we assumed
\eqref{p:lowerbound}, i.e., that $p > \log(2/\delta)$, which by definition of $p$ is equivalent to $N > e(2/\delta)^{1/\alpha} n$. At the same time,
the requirement \eqref{N:require} is equivalent to $N \geq (16 \ln(2) e^\alpha)^{1/(1-\alpha)} n$.

Summarizing, all required conditions on $N$ are satisfied if
$N \geq c_0 n$ with
\[
c_0 = c_0(\alpha,r,L/\gamma,\delta) =
 \max\left\{c_2(\alpha),c_3(\alpha,r,L/\gamma), c_4(\alpha,r,L/\gamma), 3(2/\delta)^{1/\alpha}, (16 \ln(2) e^\alpha)^{1/(1-\alpha)}\right\}.
 \]
In this case, choosing $x = \exp(-p)/16$ in Theorem~\ref{thm:talagrand} and noting
that
\[
\sup_{f \in \FF} \left|\frac{1}{N} \sum_{i=1}^N f(X_i) \right| \leq \sup_{f \in \FF} \left|\frac{1}{N} \sum_{i=1}^N f(X_i) - \E f \right| + \sigma,
\]
it is evident that
\[
\sup_{u \in (\eta / \gamma) {\cal B}} \frac{1}{N} \sum_{i=1}^N \IND_{\left\{|\inr{X_i,z}\geq 1/2\right\}} \leq R + \sigma + x \leq \frac{3}{8} \exp(-p)
\]
outside an event whose probability is at most
\begin{align*}
\exp\left(-N \frac{x^2/2}{\sigma^2 + 2 R + x/3}\right)
&\leq \exp\left( - N \frac{\exp(-p)^2/(2\cdot 16^2)}{\exp(-p)^2/256 + \exp(-p)/2 + \exp(-p)/48}\right)  \\
&\leq  \exp\left(- c_6N \exp(-p)\right) = \exp(- c_1 N^{1-\alpha} n^{\alpha}),
\end{align*}
where $c_6 = 3/806$ and $c_1 = c_6/e \approx 0.0014$.
This completes the proof of \eqref{eq:sup-process} and, hence, of
Theorem~\ref{thm:main1}.
\endproof

\begin{Remark}\label{Rem:conv}
The proof only needs very little adaptation if one replaces ${\rm{absconv}}(X_1,\hdots,X_N)$ by the standard
convex hull $\conv(X_1,\hdots,X_N)$. In fact, $\conv(X_1,\hdots,X_N) = \Gamma^* \Delta_1^N$, where
$\Delta_1^N = \{x \in \R^N : x_i \geq 0, \sum_{i=1}^N x_i \leq 1\}$ is the standard simplex. Then 
$\|\Gamma t\|_\infty$ in \eqref{Gamma-infty} and \eqref{eq:what-to-prove} is replaced by $\max_{i=1,\hdots,N} (\Gamma t)_i = \max_{i=1,\hdots,N} \langle X_i,t\rangle$. Now, \eqref{count:large-coord} 
works without the absolute values around $\langle X_i,t \rangle$, anyway, so that the rest of the proof remains the same.
\end{Remark}

\section{The  floating bodies for various random vectors}
\label{sec:K-bodies}
Although Theorem~\ref{thm:main1} is (almost) universal, it is unrealistic to expect that the second part of Question \ref{Qu:main} can be addressed with a single result. Therefore, the identity of the sets $K_{p}(X)$ has to be studied on a case-by-case basis. Having said that, there are some general principles that can be used to identify, or at least approximate the sets $K_{p}(X)$. Firstly, as outlined in what follows, there are natural examples in which $K_{p}(X)$ can be identified directly---among them are the standard Gaussian vector $X=G$; the standard Rademacher vector $X={\cal E}$; and when $X$ is a $q$-stable random vector. Secondly, we show in
Section~\ref{sec:L-p} that if linear forms $\inr{X,t}$ have $p$-th moments and satisfy a weak regularity condition, then $K_{p}(X)$ is equivalent to $B(L_p(X))$. Perhaps, one could have actually expected a variant
of Theorem~\ref{thm:main1} with $K_{p}(X)$ replaced by  $B(L_p(X))$ in the first place,
but clearly $B(L_p(X))$ does not work in heavy-tailed situations, where it may be trivial if $p = \alpha \log(eN/n) > r$.
This observation, combined with Theorem~\ref{thm:main1} improves the main result from \cite{DGT} which studies random polytopes generated by isotropic, log-concave random vectors. Then, in Section \ref{sec:stochastic-dom}, we explain how stochastic domination can be translated to information on the structures of the floating bodies. That allows one to show that ${\rm absconv}(X_1,\ldots,X_N)$ contains large canonical sets for very general random vectors, even when $X$ does not necessarily have independent entries.

\subsection{Direct analysis of the floating body} \label{sec:directfloatingbody}
The first two natural examples one should consider are $X=G$, the standard Gaussian random vector and $X={\cal E}$, the standard Rademacher random vector. A direct computation shows that
$$
c_1 \frac{1}{\sqrt{p}} B_2^n \subset K_{p}(G)\subset c_2 \frac{1}{\sqrt{p}} B_2^n,
$$
and by \cite{Ms},
$$
c_1' {\rm conv}(B_1^n \cup (1/\sqrt{p})B_2^n) \subset K_{p}({\cal E}) \subset c_2'  {\rm conv}(B_1^n \cup (1/\sqrt{p})B_2^n),
$$
where $c_1$, $c_1'$, $c_2$ and $c_2'$ are absolute constants.
Therefore, in both cases, Theorem~\ref{thm:main1} implies that ${\rm absconv}(X_1,\ldots,X_N)$ contains a large canonical body. In particular, one recovers the estimates of Theorem~\ref{thm:Gluskin} and of Theorem~\ref{thm:LPRT} for the Rademacher random vector ${\cal E}$ stating that with high probability,
$$
{\rm absconv}(G_1,\ldots,G_N) \supset c_2 \sqrt{\alpha \log(eN/n)} B_2^n
$$
and
$$
{\rm absconv}({\cal E}_1,\ldots,{\cal E}_N) \supset c_2'  \bigl(B_\infty^n \cap \sqrt{ \alpha \log(eN/n)} B_2^n \bigr).
$$
We explain how Theorem~\ref{thm:LPRT} can be recovered from Theorem~\ref{thm:main1} in full generality in Section \ref{sec:stochastic-dom}.

\medskip
Another, more surprising example in which $K_{p}(X)$ can be computed directly consists in the case that $X$ is a standard $q$-stable random vector, a situation outlined in Theorem~\ref{thm:stable}.

\subsubsection{Proof of Theorem~\ref{thm:stable}} \label{sec:stable}
Recall that for $1 \le q < 2$, a  random variable $\xi$ is called standard $q$-stable  if its characteristic function satisfies $\E[\exp(i t X)] = \exp(- |t|^q/2)$ for every $t \in \R$ (we consider only the symmetric case).
The proof of Theorem~\ref{thm:stable} is based on several well known facts, see, e.g.,~\cite[Chapter 5]{LT}.
\begin{description}
\item{(F$1)$} If $\xi_1, \ldots, \xi_n$ are independent copies of a standard $q$-stable random variable $\xi$, and $X=(\xi_i)_{i=1}^n$, then for any $t \in \R^n$, $\inr{t,X}$ has the same distribution as $\xi \|t\|_q$.
\item{(F$2)$} While a standard $q$-stable random variable does not belong to $L_q$, it does belong to the weak-$L_q$ space $L_{q,\infty}$, i.e., $\sup_{u>0} u^q \PROB(|\xi| \geq u) \leq C_q$ for some constant $C_q > 0$.
\item{(F$3)$} The weak $L_q$ behaviour of $\xi$ is sharp: there exist constants $M_q, c_q > 0$ such that for any $u \geq M_q$, $\PROB(|\xi| \geq u) \geq c_q/u^q$.
\end{description}

From here on, let $\xi$ be a standard $q$-stable random variable for some $1 \leq q < 2$.
Let us first show that $X$ satisfies Assumption~\ref{ass:main}, though obviously, due to the stability property (F$1)$, \emph{not} with respect to the Euclidean norm, but rather with respect to $\norm_q$. By (F$2)$, $\xi$ has a bounded $L_r$ (quasi)-norm for any $0<r < q$. As a result, $X$ satisfies the $L_r$ condition \eqref{eq:L-r-intro} with respect to $\norm_q$ for $r=q/2$ and constant $L=L_q$. At the same time, e.g., by a Paley-Zygmund argument (see e.g. \cite[Chapter 3.3]{PenaGine12}), it is straightforward to verify that $X$ satisfies the small-ball condition \eqref{eq:SB-intro} with respect to $\norm_q$ for constants $\gamma=\gamma_q$ and $\delta=\delta_q$ that depend only on $q$.

Therefore, invoking Theorem~\ref{thm:main1}, a typical realization of ${\rm absconv}(X_1,\ldots,X_N)$ contains \linebreak $c  (K_{p}(X))^\circ$ for $p=\alpha \log(eN/n)$. It remains to identify the floating body $K_{p}(X)$.
To this end, observe that
\begin{equation} \label{eq:K-for-beta-stable}
K_{p}(X) \subset c_2(q)  \left(\frac{n}{N}\right)^{\alpha/q} B_{q}^n. 
\end{equation}
Indeed, let $t \in K_{p}(X)$. By (F$1)$, $\inr{X,t}$ has the same distribution as $\xi \|t\|_q$ and
$$
\PROB\left( \xi \geq \frac{1}{\|t\|_q}  \right)  = \PROB( \inr{X,t} \geq 1)
\leq \exp(-p) = \left(\frac{n}{eN}\right)^{\alpha}.
$$
Since $N/n$ is `large enough', it follows that for $M_q$ as in (F$3)$, $\|t\|_q \leq 1/M_q$; indeed, otherwise $\PROB(\xi \geq M_q) \leq (n/(eN))^\alpha$ which is impossible when $N/n$ is larger than a suitable constant. Now, by (F$3)$ ,
$$
c_q \left( \|t\|_{q}\right)^q \leq \PROB\left( \xi \geq \frac{1}{\|t\|_q}  \right) \leq \left( \frac{n}{eN}\right)^\alpha,
$$
implying that 
$$
\|t\|_q \leq c_2  \left(\frac{n}{eN}\right)^{\alpha/q},  
$$
where $c_2 = c_2(q) = c_q^{1/q}$. 
This establishes \eqref{eq:K-for-beta-stable} and completes the proof of Theorem~\ref{thm:stable} by taking the polar.
\endproof

\subsection{Floating bodies and the  unit ball of $L_p(X)$.}

\label{sec:L-p}
In order to get a better intuition on the role of the sets $K_{p}(X)$, let us consider a case in which $X$ is a `reasonably nice' random vector, in the sense that each $\inr{X,t}$ has sufficiently many moments and exhibits a weak kind of regularity. As we show next, the sets $K_{p}(X)$ are then equivalent to
$$
B(L_p(X))=\left\{t \in \R^n : (\E |\inr{X,t}|^p)^{1/p} \leq 1 \right\}, \quad p \geq  1.
$$
The polar body 
\begin{equation}\label{def:Zp}
Z_p(X) := B(L_p(X))^\circ 
\end{equation}
is called the $L_p$-centroid body of $X$. 
The fact that there is a connection between $K_{p}(X)$ and $B(L_p(X))$ is an immediate outcome of Markov's inequality:
$$
\PROB( \inr{X,t} \geq e \|\inr{X,t}\|_{L_p}) \leq \PROB( |\inr{X,t}|^{p} \geq e^{p} \|\inr{X,t}\|_{L_p}^{p}) \leq \exp(-p).
$$
Therefore, if $\|\inr{X,t}\|_{L_p} \leq 1/e$ then $t \in K_{p}(X)$, i.e.,
\begin{equation} \label{eq:L-p-1}
 \frac{1}{e} B\bigl(L_p(X)\bigr) \subset K_{p}(X).
\end{equation}
In order to prove a reverse inequality one requires an additional regularity condition on $X$.
\begin{Definition} \label{def:weak-regularity}
The random vector $X$ satisfies a regularity condition with constant $\LL$ if for every $q \geq 2$ and every $t \in \R^n$,
\begin{equation} \label{eq:weak-regularity}
\|\inr{t,X}\|_{L_{2q}} \leq \LL \|\inr{t,X}\|_{L_q}.
\end{equation}
\end{Definition}

\begin{Lemma} \label{lemma:L-p-2}
Let $X$ be a symmetric random vector for which \eqref{eq:weak-regularity} holds. Then, for every 
$p \geq c_2$, 
$$
K_{p}(X) \subset 2 B\bigl(L_{c_1p}(X)\bigr),
$$
where $c_1 = 1/(4\log(4\LL/3))$ and $c_2 = \max\{2 c_1, 2 \log(2)\}$. 
\end{Lemma}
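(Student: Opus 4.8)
The plan is to prove the claim in its equivalent moment form: for every $t \in K_p(X)$, writing $Z := \inr{X,t}$, we show that $\|Z\|_{L_{c_1 p}} \le 2$, which is precisely the assertion that $t \in 2\,B(L_{c_1 p}(X))$. Two elementary properties of $Z$ will be used. First, since $t \in K_p(X)$ and $Z$ is symmetric, $\PROB(|Z| \ge 1) = 2\,\PROB(Z \ge 1) \le 2\,e^{-p}$. Second, the regularity hypothesis \eqref{eq:weak-regularity} gives $\|Z\|_{L_{2q}} \le \LL\,\|Z\|_{L_q}$ for the relevant exponents $q$.

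Put $q := c_1 p$. Splitting the $q$-th moment of $Z$ at the level $1$,
\[
\E |Z|^q \;\le\; 1 + \E\bigl(|Z|^q\,\IND_{\{|Z|>1\}}\bigr),
\]
and by the Cauchy--Schwarz inequality followed by \eqref{eq:weak-regularity} applied at the exponent $q$,
\[
\E\bigl(|Z|^q\,\IND_{\{|Z|>1\}}\bigr) \;\le\; \bigl(\E |Z|^{2q}\bigr)^{1/2}\,\PROB(|Z|>1)^{1/2} \;=\; \|Z\|_{L_{2q}}^{\,q}\,\PROB(|Z|>1)^{1/2} \;\le\; \LL^{\,q}\,\bigl(\E |Z|^{q}\bigr)\,\bigl(2\,e^{-p}\bigr)^{1/2}.
\]
Combining the two displays gives $\E |Z|^q \le 1 + \sqrt{2}\,\LL^{\,q}\,e^{-p/2}\,\E |Z|^q$, so that as soon as $\sqrt{2}\,\LL^{\,q}\,e^{-p/2} \le \tfrac12$ the last term can be absorbed into the left-hand side, yielding $\E |Z|^{q} \le 2$ and hence $\|Z\|_{L_q} = (\E|Z|^{q})^{1/q} \le 2$.

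It then remains to check that $\sqrt{2}\,\LL^{\,q}\,e^{-p/2} \le \tfrac12$ holds whenever $p \ge c_2$. After taking logarithms and substituting $q = c_1 p$, this is equivalent to $p\,\bigl(\tfrac12 - c_1\log\LL\bigr) \ge \tfrac32\log 2$; since $c_1 = 1/(4\log(4\LL/3))$ and $\log\LL < \log(4\LL/3)$, we have $c_1\log\LL < \tfrac14$, so $\tfrac12 - c_1\log\LL > \tfrac14$ and the inequality holds once $p$ is at least a fixed multiple of $\log 2$. The one genuinely delicate point is the constant bookkeeping needed to reconcile these thresholds with the exact values of $c_1$ and $c_2$ in the statement — including making sure that the exponent $c_1 p$ lies in the range of validity of \eqref{eq:weak-regularity}. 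The cleanest way to sharpen the constant in front of $\log 2$ is to replace the Cauchy--Schwarz step by H\"older's inequality with conjugate exponents $\bigl(2^j,\,(1-2^{-j})^{-1}\bigr)$ and to iterate \eqref{eq:weak-regularity} $j$ times, which turns the factor $\LL^{\,q}\,(2e^{-p})^{1/2}$ into $\LL^{\,jq}\,(2e^{-p})^{1-2^{-j}}$ and leaves $j$ to be optimised. I expect this bookkeeping, rather than any structural difficulty, to be the main obstacle; the self-improvement mechanism above is otherwise entirely self-contained given \eqref{eq:weak-regularity}.
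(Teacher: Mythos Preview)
Your approach is correct and takes a genuinely different route from the paper. The paper proceeds via the Paley--Zygmund inequality: writing $Z=\inr{X,t}$ and using \eqref{eq:weak-regularity} at $q=c_1 p$, one obtains
\[
\PROB\Bigl(|Z|\ge\tfrac12\|Z\|_{L_q}\Bigr)\;\ge\;\Bigl((1-2^{-q})\,\tfrac{\|Z\|_{L_q}}{\|Z\|_{L_{2q}}}\Bigr)^{2q}\;\ge\;\Bigl(\tfrac{3}{4\LL}\Bigr)^{2q}=\exp\!\bigl(-2q\log(4\LL/3)\bigr)=e^{-p/2},
\]
so that by symmetry and $p\ge 2\log 2$ one gets $\PROB(Z\ge\tfrac12\|Z\|_{L_q})\ge e^{-p}$, forcing $\|Z\|_{L_q}\le 2$ directly from the definition of $K_p(X)$. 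You instead split $\E|Z|^q$ at level~$1$, apply Cauchy--Schwarz together with \eqref{eq:weak-regularity}, and run a self-improvement/absorption argument. Both methods invoke the regularity hypothesis exactly once and are equally elementary; the Paley--Zygmund route gives the threshold $p\ge 2\log 2$ on the nose, whereas your Cauchy--Schwarz step lands at roughly $6\log 2$ --- but, as you correctly diagnose, this is pure bookkeeping, and your conclusion $\E|Z|^q\le 2$ is even marginally sharper than the paper's $\|Z\|_{L_q}\le 2$. (Both arguments need $q=c_1p\ge 2$ for \eqref{eq:weak-regularity} to apply; the ``$2c_1$'' in the stated $c_2$ is evidently meant to be $2/c_1$.)

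One small point you should make explicit: the absorption step $\E|Z|^q\le 1+\tfrac12\,\E|Z|^q\Rightarrow\E|Z|^q\le 2$ presupposes $\E|Z|^q<\infty$. This is harmless here --- finiteness of $\|Z\|_{L_2}$ together with iterated use of \eqref{eq:weak-regularity} gives all moments --- but it deserves a one-line remark.
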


\proof Fix $t \in \R^n$. By the symmetry of $X$,
$$
\PROB( \inr{X,t} \geq 1) = \frac{1}{2}\PROB( |\inr{X,t}| \geq 1),
$$
and invoking the Paley-Zygmund inequality (see, e.g.\cite[Chapter 3.3]{PenaGine12}) yields, for any $q \geq 2$,
\begin{align}
\label{PaleyZyg1}
\PROB\left( |\inr{X,t}| \geq \frac{1}{2} \|\inr{X,t}\|_{L_q} \right) & \geq
\left( (1-(1/2)^q) \frac{\|\inr{X,t}\|_{L_q}}{\|\inr{X,t}\|_{L_{2q}}}\right)^{2q} \geq \left(\frac{3}{4\LL}\right)^{2q} \\
&= \exp(-2q \log(4\LL/3)). \notag
\end{align}
Hence, if $q = c_1 p$ with $c_1 = c_1(\LL) = 1/(4 \log(4\LL/3))$ and $p \geq 2 \log(2)$ then
$$
\PROB\left( \inr{X,t} \geq \frac{1}{2} \|\inr{X,t}\|_{L_q} \right) \geq \frac{1}{2} \exp(- q/(2c_1))
\geq \exp(-p/2 -p/2) = \exp(-p).
$$
Hence, if $t \in K_{p}(X)$ then $\|\inr{X,t}\|_{L_q} \leq 2$, as claimed.
\endproof

\begin{Remark}
Note that in order to prove that $K_p(X) \subset 2B(L_p(X))$ for a fixed value of $p$ it suffices that $X$ satisfies that $\|\inr{X,t}\|_{L_{2q}} \leq \LL \|\inr{X,t}\|_{L_q}$ for $q=c^\prime p$.
\end{Remark}

\subsubsection*{Log-concave random vectors}

Let us give one generic example in which \eqref{eq:weak-regularity} holds and $K_{p}(X)$ is equivalent to $B(L_p(X))$. There are many other natural examples of random vectors that satisfy \eqref{eq:weak-regularity} (e.g., the Rade\-macher vector ${\cal E}$, thanks to Borell's hypercontractivity inequality \cite{Borell}), but since the focus of this note is on random polytopes generated by a heavy-tailed random vectors we will not pursue this direction further.

A random vector is log-concave if it has a density $f$ satisfying that for every $x,y$ in its support and any $0<\lambda<1$, $f(\lambda x +(1-\lambda)y) \geq f^\lambda(x) f^{1-\lambda}(y)$. 
The $L_p$-centroid bodies 
$Z_p(X)$ defined in \eqref{def:Zp} play a crucial role in the study of log-concave measures \cite{luzh97,Paouris06}.
For more information on log-concave random vectors we refer the reader to \cite{GeoIsoConvex14, GNT}.  

Let $X$ be a symmetric log-concave random vector that is non-degenerate, i.e., whose support is not contained
in a proper subspace of $\R^n$. 
It follows from Borell's inequality \cite{Borell} (see e.g. \cite[Proposition 5.16]{GNT}) that for every $t \in \R^n$ and $1 \leq p \leq q < \infty$,
\begin{equation}\label{Borell-ineq}
\|\inr{X,t}\|_{L_p} \leq \|\inr{X,t}\|_{L_q} \leq 12 \frac{q}{p} \|\inr{X,t}\|_{L_p}.
\end{equation}
Therefore, $X$ satisfies the weak regularity condition \eqref{eq:weak-regularity} with constant $\LL=24$, implying that $B(L_p(X)) \sim K_{p}(X)$. Further, by \eqref{PaleyZyg1} with $q=2$, $X$ satisfies 
a small-ball condition with respect to the norm $\|t\|_X := (\E|\langle X,t \rangle|^2)^{1/2} = \|\Sigma^{1/2} t\|_2$
with constants $\gamma = 1/2$ and $\delta = (1/32)^4$. Here  $\Sigma = \E XX^T$ is the covariance matrix of $X$, 
which is nonsingular by the non-degenerateness assumption on $X$ so that $\|\cdot\|_X$ is actually a norm.
Moreover, \eqref{Borell-ineq} also implies that  $X$ satisfies the $L_r$-condition for $r=4$ 
with respect to $\|t\|_X$ with $L=24$. 
Theorem~\ref{thm:main1} then leads to the following result. 
\begin{Theorem} \label{thm:poly-log-concave}
Let $X$ be a symmetric, non-degenerate, log-concave random vector.
Let $0<\alpha<1$, set $N \geq c_0(\alpha)n$ and put $p=\alpha \log(eN/n)$. Then, with probability at least $1-2\exp(-c_1 N^{1-\alpha} n^\alpha)$,
\begin{equation} \label{eq:in-thm-log-concave}
{\rm absconv}(X_1,\ldots,X_N) \supset c_2 Z_p(X)
\end{equation}
where $c_2$ is a universal constant.
\end{Theorem}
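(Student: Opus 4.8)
The plan is to obtain Theorem~\ref{thm:poly-log-concave} as a direct consequence of Theorem~\ref{thm:main1} once one knows, for a symmetric, non-degenerate, log-concave $X$, that all hypotheses hold with \emph{purely numerical} constants and that $K_p(X)$ is two-sidedly equivalent to $B(L_p(X))$; the conclusion then follows by taking polars. First I would fix the norm $\|t\|_X := (\E|\langle X,t\rangle|^2)^{1/2} = \|\Sigma^{1/2}t\|_2$; non-degeneracy makes $\Sigma = \E XX^T$ invertible, so $\|\cdot\|_X$ is a genuine norm. With respect to $\|\cdot\|_X$, Borell's inequality \eqref{Borell-ineq} at $p=2,\,q=4$ gives $\|\langle X,t\rangle\|_{L_4}\le 24\|t\|_X$, i.e.\ the $L_r$ condition \eqref{eq:L-r-intro} with $r=4$, $L=24$; and the Paley--Zygmund computation \eqref{PaleyZyg1} at $q=2$ gives the small-ball condition \eqref{eq:SB-intro} with $\gamma=1/2$, $\delta=(1/32)^4$. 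Thus Assumption~\ref{ass:main} holds with absolute $\delta,\gamma,r,L$, so the threshold $c_0=c_0(\alpha,\delta,r,L/\gamma)$ in Theorem~\ref{thm:main1} depends on $\alpha$ only, and Theorem~\ref{thm:main1} yields, with probability at least $1-2\exp(-c_1 N^{1-\alpha}n^\alpha)$, that $\mathrm{absconv}(X_1,\ldots,X_N)\supset \tfrac12(K_p(X))^\circ$ for $p=\alpha\log(eN/n)$.

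The second step is to replace $(K_p(X))^\circ$ by a multiple of $Z_p(X)=B(L_p(X))^\circ$. One inclusion is \eqref{eq:L-p-1}: $\tfrac1e B(L_p(X))\subset K_p(X)$. For the reverse, \eqref{Borell-ineq} shows $X$ satisfies the regularity condition \eqref{eq:weak-regularity} with $\LL=24$, so Lemma~\ref{lemma:L-p-2} gives $K_p(X)\subset 2B(L_{c'p}(X))$ with $c'=1/(4\log 32)$, valid once $p$ exceeds the absolute threshold $c_2$ of that lemma; applying \eqref{Borell-ineq} once more with exponents $c'p\le p$ (so $\|\langle X,t\rangle\|_{L_p}\le (12/c')\|\langle X,t\rangle\|_{L_{c'p}}$) upgrades this to $K_p(X)\subset (24/c')B(L_p(X))$. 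Hence $\tfrac1e B(L_p(X))\subset K_p(X)\subset (24/c')B(L_p(X))$ with absolute constants, and passing to polars gives $(c'/24)Z_p(X)\subset (K_p(X))^\circ\subset e\,Z_p(X)$. Combining with the first step, $\mathrm{absconv}(X_1,\ldots,X_N)\supset \tfrac{c'}{48}Z_p(X)$, so $c_2=c'/48$ is a universal constant, as claimed.

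Finally, I would check that $p=\alpha\log(eN/n)$ is admissible: the only constraints are $p>\log(2/\delta)$ (needed for Proposition~\ref{prop:2.6}, already absorbed into $c_0$ inside Theorem~\ref{thm:main1}) and $p\ge$ the absolute constant $c_2$ of Lemma~\ref{lemma:L-p-2}; both hold once $N\ge c_0(\alpha)n$ with $c_0(\alpha)$ enlarged by an absolute amount, since then $p\ge\alpha\log(ec_0)$. There is no genuine obstacle here---the argument is an assembly of Theorem~\ref{thm:main1}, the inclusions \eqref{eq:L-p-1} and Lemma~\ref{lemma:L-p-2}, and the $L_q$-comparison \eqref{Borell-ineq}. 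The only point demanding care is the constant bookkeeping: verifying that every constant entering through Assumption~\ref{ass:main}, through the regularity constant $\LL=24$, and through the two applications of Borell's inequality is numerical (depending on $\alpha$ at worst, and on nothing at all in the final inclusion), so that $c_2$ indeed comes out universal.
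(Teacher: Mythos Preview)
Your proposal is correct and follows exactly the approach the paper takes: verify Assumption~\ref{ass:main} with respect to $\|t\|_X=\|\Sigma^{1/2}t\|_2$ via Borell's inequality and Paley--Zygmund (yielding absolute constants $\gamma,\delta,r,L$), apply Theorem~\ref{thm:main1}, and then identify $(K_p(X))^\circ$ with $Z_p(X)$ up to absolute constants using \eqref{eq:L-p-1}, Lemma~\ref{lemma:L-p-2}, and a further application of \eqref{Borell-ineq}. Your write-up is in fact more explicit than the paper's about the passage from $B(L_{c'p}(X))$ to $B(L_p(X))$ and about absorbing the thresholds on $p$ into $c_0(\alpha)$; the only small omission is that the second use of \eqref{Borell-ineq} also requires $c'p\ge 1$, but this is another absolute lower bound on $p$ and is absorbed in exactly the way you describe.
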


Theorem~\ref{thm:poly-log-concave} improves the main result from \cite{DGT}, which states that if $X$ is an isotropic (which means that its covariance matrix $\Sigma$ is the identity), log-concave random vector and $\Gamma$ is the random matrix whose rows are $X_1,\ldots,X_N$, then with probability at least
$1-2\exp(-c_1(\alpha) N^{1-\alpha} n^\alpha)-\PROB(\|\Gamma:\ell_2^n \to \ell_2^N\| \geq c\sqrt{N})$,
$$
{\rm absconv}(X_1,\ldots,X_N) \supset c_2(\alpha) Z_p(X).
$$

Thanks to the progress made in \cite{ALPT} in the study of random matrices with \iid isotropic log-concave rows, it is known that
$$
\PROB(\|\Gamma:\ell_2^n \to \ell_2^N\| \geq c\sqrt{N}) \leq \exp(-c^\prime \sqrt{n}).
$$
Therefore, the probability bound of the result in \cite{DGT} is 
 weaker than the one Theorem~\ref{thm:poly-log-concave}.

\subsection{Stochastic domination} 
\label{sec:stochastic-dom}

Up to this point, the examples focused on random vectors $X$ for which $K_{p}(X)$ can either be studied directly, or is equivalent to a natural convex body. One way of extending the scope of the analysis of the random polytopes ${\rm absconv}(X_1,\ldots,X_N)$ is by comparing the floating bodies $K_{p}(X)$ that are associated with different random vectors. As it happens, this comparison is simply a way of coding \emph{stochastic domination}.

\begin{Definition} \label{def:domination}
Let $X$ and $Y$ be centered random vectors in $\R^n$. The random vector $X$ dominates $Y$ with constants $\lambda_1$ and $\lambda_2$ if for every $t \in \R^n$ and every $u>0$,
$$
\PROB( \inr{X,t} \geq u) \geq \lambda_1 \PROB(\inr{Y,t} \geq \lambda_2 u).
$$
\end{Definition}
This means that  if $X$ dominates $Y$ with constants $\lambda_1$ and $\lambda_2$ then
\begin{equation} \label{eq:domination:inclusion}
K_{p}(X) \subset \lambda_2 K_{p^\prime}(Y)
\end{equation}
for $p^\prime=p-\log(1/\lambda_1)$.

\medskip

It is well known that this notion of domination is well-suited for the study of random vectors with \iid coordinates because it is preserved under tensorization:
\begin{Theorem} \label{thm:tensor} \cite{KW}
There are absolute constants $c_1$ and $c_2$ for which the following holds.
Let $x$ and $y$ be symmetric random variables and assume that for every $u>0$, $\PROB(x > u ) \geq \lambda_1 \PROB(y \geq \lambda_2 u)$. Let $x_1,\ldots,x_n$ be independent copies of $X$ and set $y_1,\ldots,y_n$ to be independent copies of $y$. Then $X=(x_i)_{i=1}^n$ dominates $Y=(y_i)_{i=1}^n$ with constants $c_1\lambda_1$ and $c_2\lambda_2$.
\end{Theorem}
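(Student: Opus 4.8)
The plan is to reduce the statement about the random vectors $X$ and $Y$ to a one–dimensional comparison of the weighted sums $\sum_i t_ix_i$ and $\sum_i t_iy_i$, and to prove that comparison by combining a stochastic–domination coupling with the contraction principle. By the homogeneity in Definition~\ref{def:domination}, it suffices to prove that for every fixed $t\in\R^n$ and every $u>0$,
\[
\PROB\Big(\sum_{i=1}^n t_ix_i\ge u\Big)\ \ge\ c_1\lambda_1\,\PROB\Big(\sum_{i=1}^n t_iy_i\ge c_2\lambda_2u\Big).
\]
Replacing $y$ by $y/\lambda_2$ only rescales the right–hand side (and the constant $c_2$), so I may assume $\lambda_2=1$; the hypothesis then reads $\PROB(x>u)\ge\lambda_1\PROB(y\ge u)$, which by symmetry of $x$ and $y$ is equivalent to $\PROB(|x|\ge u)\ge\lambda_1\PROB(|y|\ge u)$ for all $u>0$. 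Reformulated: if $\delta$ is a Bernoulli$(\lambda_1)$ variable independent of $y$, then $|x|$ stochastically dominates $\delta\,|y|$.

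I would then couple the two i.i.d.\ sequences. By symmetry, write $x_i=\varepsilon_i|x_i|$ and $y_i=\varepsilon_i'|y_i|$, where $(\varepsilon_i)_{i\le n}$, $(\varepsilon_i')_{i\le n}$ are independent Rademacher sequences, independent of all the magnitudes, and let $(\delta_i)_{i\le n}$ be i.i.d.\ Bernoulli$(\lambda_1)$, independent of everything. The stochastic domination above lets me couple the $n$ coordinates independently so that $|x_i|\ge\delta_i|y_i|$ almost surely. Conditioning on the magnitudes and on $(\delta_i)_{i\le n}$, both $\sum_i t_ix_i=\sum_i\varepsilon_i(t_i|x_i|)$ and $\sum_i\delta_it_iy_i=\sum_i\varepsilon_i'(\delta_it_i|y_i|)$ are Rademacher sums whose coefficient vectors satisfy $|\delta_it_i|y_i||\le|t_i|x_i||$ coordinatewise; applying the tail form of the contraction principle for Rademacher sums, using that both sums are symmetric, and then averaging out the magnitudes gives
\[
\PROB\Big(\sum_{i=1}^n t_ix_i\ge u\Big)\ \ge\ \tfrac12\,\PROB\Big(\sum_{i=1}^n\delta_it_iy_i\ge u\Big).
\]

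The remaining task—and the step I expect to be the main obstacle—is to compare the randomly thinned sum $\sum_i\delta_it_iy_i$ with the full sum $\sum_i t_iy_i$: one must show that keeping each term of a symmetric i.i.d.\ sum independently with probability $\lambda_1$ costs at most a factor of order $\lambda_1$ in the tails, at the price of an absolute rescaling of the threshold. This cannot be obtained by discarding terms (which yields an inequality in the wrong direction) nor by a one–coordinate–at–a–time replacement (which would lose a factor $\lambda_1^n$); it needs a genuinely combinatorial argument that controls the effect of the Bernoulli selection on both the diffusive part of the tail and the contribution of the largest coordinates, and this is precisely the comparison principle for sums of independent symmetric random variables proved in \cite{KW}. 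Granting it, combining with the last display and undoing the $\lambda_2$–rescaling yields $\PROB(\inr{X,t}\ge u)\ge c_1\lambda_1\PROB(\inr{Y,t}\ge c_2\lambda_2u)$ for all $t$ and $u$, i.e.\ $X$ dominates $Y$ with constants $c_1\lambda_1$ and $c_2\lambda_2$.
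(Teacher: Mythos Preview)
The paper does not give its own proof of Theorem~\ref{thm:tensor}; it is quoted from \cite{KW} and used as a black box. So there is nothing to compare your argument to on the paper's side.

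As for your sketch itself: the reduction you carry out is correct. Normalizing $\lambda_2=1$, coupling so that $|x_i|\ge\delta_i|y_i|$ with $\delta_i$ i.i.d.\ Bernoulli$(\lambda_1)$, and then applying the tail contraction principle \eqref{eq:contraction} conditionally on the magnitudes does give
\[
\PROB\Big(\sum_i t_ix_i\ge u\Big)\ \ge\ \tfrac12\,\PROB\Big(\sum_i \delta_it_iy_i\ge u\Big).
\]
What remains is exactly the statement that Bernoulli$(\lambda_1)$ thinning of a sum of independent symmetric variables costs only a factor $c\lambda_1$ in the tails up to an absolute rescaling of the level. You correctly identify this as the heart of the matter and defer it to \cite{KW}. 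Note, however, that this residual statement is itself a special case of Theorem~\ref{thm:tensor} (take $x=\delta y$, for which $\PROB(x>u)=\lambda_1\PROB(y>u)$), so your reduction, while valid, has not made the problem strictly easier---it has isolated the essential case but not resolved it. In that sense your write-up is an accurate outline of how one arrives at the crux, but not a self-contained proof; the substantive argument still lives in \cite{KW}, just as in the paper.
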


Theorem~\ref{thm:tensor} leads to many structural results on ${\rm absconv}(X_1,\ldots,X_N)$ for vectors with \iid coordinates, by comparing $x$ to a canonical random variable like a Rademacher random variable (i.e., a symmetric, $\{-1,1\}$-valued random variable) or to the standard Gaussian random variable. 

Observe that if $x$ is a symmetric random variable that satisfies $\PROB(|x| \geq \gamma_0) \geq \delta_0$ then we have 
$$
\PROB(x \geq u) \geq \delta_0\PROB(\eps > u/\gamma_0),
$$
where $\eps$ is a Rademacher random variable. Hence, from Theorem~\ref{thm:tensor}, we get that 
if $x_1,\ldots, x_n$ are independent copies of $x$ and $X=(x_i)_{i=1}^n$, then $X$ dominates the Rademacher vector ${\cal E}$ with constants $\lambda_1$ and $\lambda_2$ that depend only on $\gamma_0$ and $\delta_0$. 
As a result, by \eqref{eq:domination:inclusion},
$$
K_{p}(X) \subset \lambda_2 K_{p^\prime}(\cal E),
$$
where $p^\prime = p -\log(1/\lambda_1)$.
Thanks to the characterization of $K_{p}({\cal E})$ and Theorem~\ref{thm:main1} one immediately recovers Theorem~\ref{thm:LPRT} as well as the main result from \cite{GLT}.
\begin{Theorem} \label{thm:domination1}
Let $x$ be a symmetric random variable that satisfies $\E x^2 =1$ and set $x_1,\hdots,x_n$ to be independent copies of $x$ and put $X=(x_i)_{i=1}^n$. If 
there are constants $\gamma$ and $\delta$ such that $\PROB(|x| \geq \gamma) \geq \delta$,
then for $N \geq c_0 n$, with probability at least $1-2\exp(-c_1N^{1-\alpha}n^\alpha)$,
$$
{\rm absconv}(X_1,\ldots,X_N) \supset c_2(B_\infty^n \cap \sqrt{ \alpha \log(eN/n)}B_2^n);
$$
here $c_0$  depends on $\alpha, \gamma$ and $\delta$,  $c_2$ depends on $\gamma$ and $\delta$, and $c_1$ is an absolute constant.
\end{Theorem}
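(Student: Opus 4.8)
The plan is to deduce Theorem~\ref{thm:domination1} from Theorem~\ref{thm:main1} by comparing $X$ with the Rademacher vector $\mathcal E$, along the lines sketched in the discussion preceding the statement. The one ingredient not yet spelled out there is that $X$ satisfies Assumption~\ref{ass:main}, which I would check with respect to the Euclidean norm. Since $x$ is symmetric with $\E x^2=1$ it is centered, and by independence of the coordinates $\E\inr{X,t}^2=\sum_{i=1}^n t_i^2\,\E x_i^2=\|t\|_2^2$; hence the $L_r$ condition \eqref{eq:L-r-intro} holds with $r=2$ and $L=1$. For the small-ball condition I would argue through domination: the symmetry of $x$ together with $\PROB(|x|\ge\gamma)\ge\delta$ gives $\PROB(x\ge u)\ge\delta\,\PROB(\eps>u/\gamma)$ for all $u$, where $\eps$ is a Rademacher variable, so Theorem~\ref{thm:tensor} shows that $X$ dominates $\mathcal E$ with constants $\lambda_1,\lambda_2$ depending only on $\gamma,\delta$. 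Since $\mathcal E$ itself satisfies a small-ball condition with respect to $\|\cdot\|_2$ with absolute constants (a standard consequence of Khintchine's and Paley--Zygmund's inequalities), applying the domination inequality at $u$ proportional to $\|t\|_2/\lambda_2$ and using the symmetry of $X$ yields a small-ball condition \eqref{eq:SB-intro} for $X$ with respect to $\|\cdot\|_2$ with constants $\gamma_{\mathrm{sb}},\delta_{\mathrm{sb}}$ depending only on $\gamma,\delta$. Thus $\delta_{\mathrm{sb}}$, $L/\gamma_{\mathrm{sb}}$ and $r=2$ all depend only on $\gamma,\delta$, so Theorem~\ref{thm:main1} applies with a threshold $c_0=c_0(\alpha,\gamma,\delta)$.

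Next I would pass from domination to an inclusion of floating bodies and then take polars. By \eqref{eq:domination:inclusion}, for $p=\alpha\log(eN/n)$ one has $K_{p}(X)\subset\lambda_2 K_{p'}(\mathcal E)$ with $p'=p-\log(1/\lambda_1)$; enlarging $c_0$ if necessary (still depending only on $\alpha,\gamma,\delta$) we may assume $p'\ge p/2\ge 1$, so that $p'$ is comparable to $p$ and the description $K_{p'}(\mathcal E)\subset c_2'\,\conv(B_1^n\cup(1/\sqrt{p'})B_2^n)$ from \cite{Ms} is available. Combining, $K_{p}(X)\subset C\,\conv(B_1^n\cup(1/\sqrt{p'})B_2^n)$ for $C=C(\gamma,\delta)$, and passing to polar bodies via $(\conv(A\cup B))^\circ=A^\circ\cap B^\circ$, $(B_1^n)^\circ=B_\infty^n$ and $((1/\sqrt{p'})B_2^n)^\circ=\sqrt{p'}B_2^n$, gives
\[
(K_{p}(X))^\circ\supset \frac1C\bigl(B_\infty^n\cap\sqrt{p'}\,B_2^n\bigr)\supset c_2\bigl(B_\infty^n\cap\sqrt{\alpha\log(eN/n)}\,B_2^n\bigr),
\]
where in the last inclusion I use $p'\ge p/2$ and absorb the resulting factor $1/\sqrt2$ into $c_2=c_2(\gamma,\delta)$ (using $\tfrac1{\sqrt2}B_\infty^n\subset B_\infty^n$).

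Finally, Theorem~\ref{thm:main1} applied with the norm $\|\cdot\|_2$ (legitimate since Assumption~\ref{ass:main} has been verified and $N\ge c_0(\alpha,\gamma,\delta)\,n$) gives, with probability at least $1-2\exp(-c_1N^{1-\alpha}n^\alpha)$ for an absolute constant $c_1$, that ${\rm absconv}(X_1,\ldots,X_N)\supset\tfrac12(K_{p}(X))^\circ$; the displayed inclusion then yields the claim with $c_2/2$ in place of $c_2$. I do not expect a genuine obstacle: the only delicate point is the bookkeeping of constants --- confirming that $\delta_{\mathrm{sb}}$, $L/\gamma_{\mathrm{sb}}=\lambda_2/\gamma_{\mathrm{sb}}$ and $r$ feeding into $c_0$ depend only on $\alpha,\gamma,\delta$, and that $c_0$ can be taken large enough to force $p'\ge 1$ and $p'\ge p/2$ so that the characterization of $K_{p'}(\mathcal E)$ is in force.
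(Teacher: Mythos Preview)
Your proposal is correct and follows essentially the same route as the paper: the paragraph immediately preceding Theorem~\ref{thm:domination1} already contains the argument (domination of $\mathcal E$ via Theorem~\ref{thm:tensor}, the inclusion \eqref{eq:domination:inclusion}, the characterization of $K_p(\mathcal E)$, and an appeal to Theorem~\ref{thm:main1}), and you have simply filled in the one step the paper leaves implicit, namely the verification of Assumption~\ref{ass:main} for $X$ with respect to $\|\cdot\|_2$. One minor slip in your final bookkeeping remark: you wrote $L/\gamma_{\mathrm{sb}}=\lambda_2/\gamma_{\mathrm{sb}}$, but $L=1$ here; what you presumably meant is that $1/\gamma_{\mathrm{sb}}$ is proportional to $\lambda_2$, which is indeed a function of $\gamma,\delta$ only.
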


The result can be pushed much further. 
The fact that $X$ has i.i.d.~coordinates can be relaxed to an unconditional assumption. Moreover, $X$ need not have a covariance, as in fact,
Assumption~\ref{ass:main} suffices to get the desired conclusion. 
%. 

\begin{Definition} \label{def:uncond}
A random vector $X=(x_i)_{i=1}^n$ is unconditional if for every $(\eps_i)_{i=1}^n \in \{-1,1\}^n$, $(x_i)_{i=1}^n$ has the same distribution as $(\eps_i x_i)_{i=1}^n$.
\end{Definition}

\begin{Theorem} \label{thm:uncond}
For every $0<\delta <1$ there is a constant $c=c(\delta)$ such that the following holds. 
Let $X$ be an unconditional random vector that satisfies the small-ball condition with constants $\gamma$ and 
$\delta$. Then, for any $p > c_0(\delta) = 4 \log(8/\delta) + \log(4)$,
$$
K_{p}(X) \subset \frac{c(\delta)}{\gamma} K_{p}({\cal E}).
$$
In particular, if $X$ satisfies Assumption \ref{ass:main} and $N \geq c_0(\alpha,\delta,r,L/\gamma)n$, then with probability at least $1-2\exp(-c_1 N^{1-\alpha}n^{\alpha})$,
$$
{\rm absconv}(X_1,\ldots,X_N) \supset \frac{1}{2} \bigl(K_{p}(X)\bigr)^{\circ} \supset c^\prime(\delta)\gamma \bigl(B_\infty^n \cap \sqrt{\alpha \log(eN/n)}B_2^n\bigr).
$$
\end{Theorem}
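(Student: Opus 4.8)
The plan is to prove the inclusion $K_{p}(X) \subset \frac{c(\delta)}{\gamma} K_{p}({\cal E})$; granting this, the remaining (``in particular'') assertion follows at once by applying Theorem~\ref{thm:main1} to $X$ (legitimate, since there $X$ satisfies Assumption~\ref{ass:main}), taking polars in the inclusion, and invoking the identity $(K_{p}({\cal E}))^\circ \sim B_\infty^n \cap \sqrt{p}\,B_2^n$ from Section~\ref{sec:directfloatingbody}. By the homogeneity $K_p(aX) = a^{-1}K_p(X)$ (Proposition~\ref{prop:2.6}(1)) we rescale so that $\gamma = 1$; testing the small-ball condition against the coordinate directions then gives $\PROB(|x_i| \ge 1) \ge \delta$ for every $i$. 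The gauge of $K_{p}({\cal E}) \sim \conv(B_1^n \cup p^{-1/2}B_2^n)$ is the infimal convolution $\inf\{\|a\|_1 + \sqrt p\,\|b\|_2 : a+b=t\}$, which by Holmstedt's formula is $\asymp \sum_{i\le\lceil p\rceil}t_i^* + \sqrt p\,\big(\sum_{i>\lceil p\rceil}(t_i^*)^2\big)^{1/2}$, with $t^*$ the decreasing rearrangement of $|t|$; so $t \in c\,K_p({\cal E})$ is, up to an absolute constant on $c$, equivalent to the pair of estimates $\sum_{i\le \lceil p\rceil} t_i^* \lesssim c$ and $\sqrt{p}\,\big(\sum_{i>\lceil p\rceil}(t_i^*)^2\big)^{1/2} \lesssim c$. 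Thus, fixing $t \in K_{p}(X)$ with coordinates already ordered $t_1 \ge \cdots \ge t_n \ge 0$, it suffices to show that for $p > c_0(\delta)$ both quantities are $O(c(\delta))$ with $c(\delta) \asymp 1/\delta$.

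Two elementary tools do the work. By unconditionality write $X = (\varepsilon_i|x_i|)$ with $(\varepsilon_i)$ Rademacher, independent of $(|x_i|)$. First, for any index set $J$ and any realisation of $(|x_i|)$, conditioning on $\{\varepsilon_i = 1: i\in J\}$ and using that the remaining symmetric sum is nonnegative with probability $\ge \frac12$ gives $\PROB_\varepsilon\big(\sum_i \varepsilon_i|x_i|t_i \ge \sum_{i\in J}|x_i|t_i\big) \ge 2^{-|J|-1}$. Second, a reverse Markov inequality: if $0 \le Z \le M$ and $\EXP Z \ge \mu$ then $\PROB(Z\ge \mu/2) \ge \mu/(2M)$. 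The bound $\|t\|_\infty \le 1$ is then immediate: if $t_1 > 1$, conditioning on $|x_1|\ge 1$, on $\varepsilon_1 = 1$, and on the nonnegativity of the symmetric remainder gives $\PROB(\langle X,t\rangle \ge 1) \ge \delta/4 > e^{-p}$, contradicting $t\in K_p(X)$ once $p > \log(4/\delta)$. For the $\ell_1$-bound, suppose $\sum_{i\le\lceil p\rceil} t_i > c(\delta)$; applying the reverse Markov inequality to $Z = \sum_{i\le\lceil p\rceil} t_i\,\IND_{\{|x_i|\ge 1\}}$ (with $\EXP Z \ge \delta\sum_{i\le\lceil p\rceil}t_i$ and $M = \sum_{i\le\lceil p\rceil}t_i$) shows that with probability $\ge \delta/2$ the set $J = \{i\le\lceil p\rceil : |x_i|\ge 1\}$ satisfies $\sum_{i\in J}|x_i|t_i \ge \sum_{i\in J}t_i \ge 1$ (provided $c(\delta) \ge 2/\delta$); the first tool then yields $\PROB(\langle X,t\rangle\ge 1) \ge \frac{\delta}{2}\,2^{-\lceil p\rceil - 1}$, which exceeds $e^{-p}$ as soon as $p > 4\log(8/\delta) + \log(4) = c_0(\delta)$ — a contradiction. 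Hence $\sum_{i\le\lceil p\rceil}t_i \le c(\delta)$, and in particular $t_{\lceil p\rceil+1} \le t_{\lceil p\rceil} \le c(\delta)/p$.

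It remains to bound $\big(\sum_{i>\lceil p\rceil}t_i^2\big)^{1/2}$, which is the heart of the argument. One first peels off the head: conditioning on $(|x_i|)_i$ and on the head signs $(\varepsilon_i)_{i\le\lceil p\rceil}$, the head contribution $\sum_{i\le\lceil p\rceil}\varepsilon_i|x_i|t_i$ is symmetric, whence $\PROB(\langle X,t\rangle\ge 1) \ge \frac12\,\PROB\big(\sum_{i>\lceil p\rceil}\varepsilon_i|x_i|t_i \ge 1\big)$; the same device applied to the ``inactive'' tail coordinates $\{i>\lceil p\rceil : |x_i|<1\}$ reduces matters further to the Rademacher sum over the active tail $A = \{i>\lceil p\rceil : |x_i|\ge 1\}$, whose coefficients obey $|x_i|t_i \ge t_i$. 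Suppose now $\sum_{i>\lceil p\rceil}t_i^2 > c(\delta)^2/p$; by the reverse Markov inequality applied to $Z' = \sum_{i>\lceil p\rceil}t_i^2\,\IND_{\{|x_i|\ge 1\}}$, with probability $\ge \delta/2$ one has $\sum_{i\in A}t_i^2 \ge \frac{\delta}{2}\sum_{i>\lceil p\rceil}t_i^2 > \frac{\delta\,c(\delta)^2}{2p}$. On this event we apply a standard sub-Gaussian lower bound for Rademacher sums — for nonnegative coefficients $b$ and $0 < u \le \|b\|_2^2/\|b\|_\infty$, $\PROB(\sum_i\varepsilon_i b_i \ge u) \ge c_1\exp(-c_2 u^2/\|b\|_2^2)$ — at level $u = 1$. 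Since the $|x_i|$ are controlled only from below, we truncate: on $A$ set $b_i = \min\{|x_i|t_i,\, 2c(\delta)/p\}$, so that $t_i \le b_i \le 2c(\delta)/p$, and pass from $\sum_{i\in A}\varepsilon_i b_i$ back to $\sum_{i\in A}\varepsilon_i|x_i|t_i$ using the monotonicity of Rademacher-sum upper tails under coordinatewise enlargement within a fixed support. Taking $c(\delta) = c/\delta$ with $c$ a sufficiently large absolute constant makes $\|b\|_2^2 \ge \sum_{i\in A}t_i^2 \ge \frac{\delta c(\delta)^2}{2p} \ge \frac{2c(\delta)}{p} \ge \|b\|_\infty$ (so the range condition holds) and $\exp(-c_2/\|b\|_2^2) \ge \exp(-2c_2 p/(\delta c(\delta)^2)) \ge e^{-p/2}$; carrying the three gains $\frac12,\frac12,\frac{\delta}{2}$ back through, $\PROB(\langle X,t\rangle\ge 1) \gtrsim \delta\,e^{-p/2} > e^{-p}$ for $p > c_0(\delta)$ — again a contradiction. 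Hence $\sqrt{p}\,\big(\sum_{i>\lceil p\rceil}t_i^2\big)^{1/2} = O(c(\delta))$, and combining the three bounds gives $t \in c'(\delta)\,\conv(B_1^n\cup p^{-1/2}B_2^n) \sim c'(\delta)\,K_p({\cal E})$, the desired inclusion.

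I expect the main obstacle to be exactly this last step: when $t$ places its mass on many small tail coordinates, the required lower bound on $\PROB(\langle X,t\rangle\ge 1)$ is a genuine moderate-deviation (Gaussian-type) estimate rather than an ``aligned large coordinates'' one, and the total absence of any moment hypothesis on $X$ is what forces the truncation — one has to extract a usable Rademacher sum from a process whose coefficients are known only on a random subset of the indices.
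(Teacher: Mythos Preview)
Your argument is correct and takes a genuinely different route from the paper's.

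The paper works entirely at the level of moments: it introduces the truncated sum $Z_t = \sum_i \eps_i \phi(x_i)|t_i|$ with $\phi(z)=\min(|z|,\gamma)$, transfers the bound $\PROB(|Z_t|\ge 1)\le 4e^{-p}$ from $\inr{X,t}$ by contraction, and then combines Rademacher hypercontractivity with Paley--Zygmund to force $(\E|Z_t|^q)^{1/q}<2$ for $q\asymp p$. One more hypercontractivity step bounds $(\E|\inr{\mathcal E,t}|^p)^{1/p}$, and Markov's inequality gives $t\in (c(\delta)/\gamma)K_p(\mathcal E)$ directly, without ever decomposing $t$. Everything used is already stated in the paper (equations \eqref{eq:averagecontraction}, \eqref{eq:contraction}, \eqref{eq:hyper-Rad}).

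Your approach is more geometric: you identify the gauge of $K_p(\mathcal E)$ via Holmstedt's $K$-functional, split $t$ into its $\lceil p\rceil$ largest coordinates and the rest, and handle the two pieces by separate lower-deviation arguments. The head argument is elementary; the tail argument requires a Montgomery--Smith type lower bound on Rademacher tails in the Gaussian regime (your ``standard sub-Gaussian lower bound''), which is correct but external to the paper. Your truncation $b_i=\min(|x_i|t_i,\,2c(\delta)/p)$ plays the same role as the paper's $\phi$: it forces the Rademacher sum into the regime where the sub-Gaussian lower bound applies. A pleasant by-product is that you obtain $c(\delta)\asymp 1/\delta$, slightly better than the paper's $c(\delta)\asymp \log(1/\delta)/\delta$; on the other hand, your verification that the \emph{stated} threshold $c_0(\delta)=4\log(8/\delta)+\log 4$ suffices is loose, since it depends on the unnamed absolute constants $c_1,c_2$ in the Rademacher lower bound and may need a small adjustment. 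The paper's moment argument is more self-contained; yours gives clearer geometric insight into why the head/tail pieces behave differently.
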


The proof of Theorem~\ref{thm:uncond} is based on contraction inequalities for the Rademacher random vector (see, e.g. \cite{LT}): if $|a_i| \leq |b_i|$ for $1 \leq i \leq n$ then for every $p \geq 1$,
\begin{equation}
\label{eq:averagecontraction}
\Bigl(\E \bigl| \sum_{i=1}^n \eps_i a_i \bigr|^p \Bigr)^{1/p} \leq \Bigl(\E \bigl| \sum_{i=1}^n \eps_i b_i \bigr|^p \Bigr)^{1/p},
\end{equation}
and for every $u >0$,
\begin{equation} \label{eq:contraction}
 \PROB\Bigl( \bigl|\sum_{i=1}^n \eps_i a_i \bigr| \geq u \Bigr) \leq  2\PROB\Bigl( \bigl|\sum_{i=1}^n \eps_i b_i \bigr| \geq u \Bigr).
\end{equation}
We also require Borell's hypercontractivity inequality \cite{Borell}: for every $t \in \R^n$ and $q > p >1$,
\begin{equation} \label{eq:hyper-Rad}
\|\inr{{\cal E},t}\|_{L_q} \leq \frac{q-1}{p-1} \|\inr{{\cal E},t}\|_{L_p}.
\end{equation}

\noindent {\bf Proof of Theorem~\ref{thm:uncond}.}
The second part of the theorem is an immediate outcome of the first part, Theorem~\ref{thm:main1}, and the fact that $X$ satisfies Assumption~\ref{ass:main}. To establish the first part, let us show that if $X = (x_1,\ldots, x_n)$ is an unconditional random vector and there are $\gamma, \delta > 0$ such that for every $1 \leq i \leq n$,
\begin{equation} \label{eq:smallballX}
\PROB( | x_i | \geq \gamma ) \geq \delta,
\end{equation}
then
$K_{p}(X) \subset \frac{c(\delta)}{\gamma} K_{p}({\cal E}).
$
Note that for this part of the theorem, $X$ does not need to satisfy the small-ball condition \ref{eq:SB-intro} for every direction,
but rather only for coordinate directions.

Let $t \in K_p(X)$. Since $X$ is unconditional and symmetric, it holds that
\begin{equation}
\label{eq:defKp}
 \frac{1}{2} \PROB_{X \otimes \eps} \Bigl( \bigl| \sum_{i=1}^n \eps_i |x_i| |t_i| \bigr| \geq 1 \Bigr)
= \PROB_X \Bigl( \sum_{i=1}^n x_i t_i \geq 1 \Bigr)
 \leq \exp(-p).
\end{equation}
Let $\phi:\R \to \R_+$ be the truncation at level $\gamma$, that is,
\begin{equation*}
\phi(z)=
\begin{cases}
|z| &\mbox{if } |z| \leq \gamma,
\\
\gamma & \mbox{if } |z| > \gamma,
\end{cases}
\end{equation*}
and set $Z_t = \sum_{i=1}^n \eps_i \phi(x_i) |t_i|$. Since $\phi(z) \le |z|$
the contraction principle \eqref{eq:contraction} yields, for every $(x_i)_{i=1}^n \in \R^n$,
\begin{align}
\PROB(|Z_t| \geq 1) &= \E_X \PROB_{\eps} (|Z_t| \geq 1)
\leq 2 \E_X \PROB_{\eps} \Bigl(\bigl|\sum_{i=1}^n \eps_i |x_i| |t_i|\bigr| \ge 1 \Bigr)
= 4 \PROB_X \Bigl( \sum_{i=1}^n x_i t_i \geq 1 \Bigr) \notag\\
& \leq 4 \exp(-p). \label{Prob:upper}
\end{align}
Observe that  for every $1 \leq i \leq n$,
$$
\E_X \phi(x_i) \ge \gamma \PROB( |x_i| \ge \gamma) \geq \gamma \delta,
$$
where the last inequality follows from the small ball assumption \eqref{eq:smallballX}.
This observation implies that for any $q > 1$,

\begin{align}
(\E |Z_t|^{2q} )^{1/2q} &  \leq \gamma \Bigl(\E_\eps \bigl(\sum_{i=1}^n \eps_i  t_i\bigr)^{2q} \Bigr)^{1/2q}
\leq \frac{2q-1}{q-1} \gamma \Bigl(\E_\eps \bigr|\sum_{i=1}^n \eps_i  t_i\bigr|^q \Bigr)^{1/q} \notag\\
&\leq  \frac{2q-1}{q-1} \delta^{-1} \Bigl(\E_\eps \bigl|\sum_{i=1}^n \eps_i \E_X \phi(x_i) |t_i| \bigr|^q \Bigr)^{1/q}
\leq \frac{2q-1}{q-1} \delta^{-1} (\E |Z_t|^q)^{1/q}. \label{Zt:est}
\end{align}
Here, the first inequality used that $\phi(z) \leq \gamma$ as well as the contraction principle \eqref{eq:averagecontraction}, the second inequality is based on the hypercontractivity inequality for the Rademacher vector \eqref{eq:hyper-Rad} and the last inequality follows from Jensen's inequality.
Therefore, by the Paley-Zygmund inequality (as in, e.g., \cite[Chapter 3.3]{PenaGine12}), we have that  \begin{align}
\PROB\left(|Z_t| \ge (\E |Z_t|^q/2 )^{1/q} \right)
&\geq\left(\frac{1}{2} \frac{ (\E |Z_t|^q )^{1/q} }{(\E |Z_t|^{2q} )^{1/2q} } \right)^{2q}
\geq \left( \frac{\delta(q-1)}{2(2q-1)}\right)^{2q}\notag \\
& = \exp\left(-2q \log\left(\frac{4q-2}{(q-1)\delta}\right)\right). \label{Prob:lower}
\end{align}
If  $q$ is such that
\begin{equation}\label{cond:qp}
2q \log\left(\frac{4q-2}{(q-1)\delta}\right) < p - \log(4),
\end{equation}
then it follows that $\E|Z_t|^q < 2$ because otherwise \eqref{Prob:lower} would be in contraction to \eqref{Prob:upper}.
Before elaborating on the implication of $\E|Z_t|^q < 2$, let us discuss the particular choice
\[
q = \frac{p - \log(4)}{2 \log(8/\delta)}.
\]
Since $p > 4 \log(8/\delta) + \log(4)$ by assumption, it follows that $q > 2$ and $q-1 > q/2$ so that
$(4q-2)/(q-1) < 8-4/q < 8$ and
\[
2q \log\left(\frac{4q-2}{(q-1)\delta}\right) < 2q \log(8/\delta) = p- \log(4),
\]
so that \eqref{cond:qp} is satisfied.
Note that since $q > 2$ and $p > 4 \log(8/\delta) + \log(4)$,
\begin{align*}
C_{p,q} &:= \frac{p-1}{q-1} < \frac{p-1}{q/2} = \frac{p-1}{p-\log(4)} 4 \log(8/\delta)  = \left(1+\frac{\log(4)-1}{p - \log(4)}\right) 4 \log(8/\delta)\\
&< 4\log(8/\delta) + \log(4) - 1 =: C_\delta.
\end{align*}
By hypercontractivity combined with \eqref{Zt:est} (starting with the term after the second inequality in the first line)
and the observation that $(\E|Z_t|^q)^{1/q} < 2^{1/q} < \sqrt{2}$, we obtain
\begin{align*}
\left( \E_\eps \left| \sum_{i=1}^n \varepsilon_i t_i \right|^p \right)^{1/p}
\leq \frac{p-1}{q-1}
\left( \E_\eps \left| \sum_{i=1}^n \varepsilon_i t_i \right|^q \right)^{1/q}
\leq C_{p,q} \frac{1}{\gamma \delta} (\E |Z_t|^q)^{1/q} <  C_\delta \frac{\sqrt{2}}{\gamma \delta} =: C(\delta,\gamma).
\end{align*}
Markov's inequality gives
\[
\PROB_\eps \left(\sum_{i=1}^n \eps_i  t_i \geq e C(\delta,\gamma)\right) \leq \exp(-p).
\]
Hence, for
$$
c(\delta) = \frac{\sqrt{2}e(4 \log(8/\delta) + \log(4/e))}{\delta}
$$
it holds that $c(\delta)/\gamma = e C(\delta,\gamma)$ and
$
K_{p} (X) 
\subset \frac{c(\delta)}{\gamma} K_{p} ({\cal E})
$
as claimed.
\endproof

\appendix

\section{Concentration inequality for VC classes of functions}
\label{Appendix:VC}
We prove Theorem~\ref{thm:talagrand} in this section, basically following \cite{ma03}
but with a simplification (avoiding the use of \cite[Lemma 3.6]{ma03} due to Talagrand \cite{ta94}). 
The main tool is the following version of Talagrand's concentration
inequality \cite{ta96-2} due to Bousquet \cite{bo02-1},
see also \cite[Theorem~8.42]{FoucartRauhut13}, which features explicit and small constants.

\begin{Theorem}  Let $\GG$ be a set of functions $g: \R^n \to \R$.
Let $X_1,\hdots,X_N$ be independent random vectors in $\R^n$ such that $\E g(X_\ell) = 0$ and
$|g(X_\ell)| \leq K$ almost surely for all $\ell = 1,\hdots,N$ and for all $g \in \FF$ for some constant $K > 0$.
Introduce
\begin{equation}\label{def:empirical}
Z = \sup_{g \in \GG} \left|\sum_{\ell=1}^N g(X_\ell)\right|.
\end{equation}
Let $\sigma^2_\ell>0$ such
that $\E \left[g(X_\ell)^2\right] \leq \sigma_\ell^2$ for all $g \in \GG$ and $\ell = 1,\hdots,N$.
Then, for all $t > 0$,
\begin{equation}\label{empirical:deviation}
\PROB(Z \geq \E Z + t) \leq \exp\left( - \frac{t^2/2}{\sigma^2_\GG + 2 K \E Z + tK/3} \right),
\end{equation}
where $\sigma^2_\GG = \sum_{\ell=1}^N \sigma_\ell^2$.
\end{Theorem}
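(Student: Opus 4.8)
The plan is to prove this by the entropy (Herbst) method, which is the route of Bousquet's original argument \cite{bo02-1} (see also \cite[Theorem~8.42]{FoucartRauhut13}); I only outline the main steps. First I would reduce to a convenient normal form. Replacing $\GG$ by $\GG\cup(-\GG)$ turns $Z=\sup_{g\in\GG}|\sum_\ell g(X_\ell)|$ into the one‑sided supremum $\sup_{g}\sum_\ell g(X_\ell)$ without affecting the hypotheses $|g(X_\ell)|\le K$, $\E g(X_\ell)=0$, $\E g(X_\ell)^2\le\sigma_\ell^2$, nor the values of $\E Z$ and $\sigma^2_\GG$; a standard separability/monotone‑approximation argument reduces to a finite class; and dividing through by $K$ lets us assume $K=1$. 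Writing $v:=\sigma^2_\GG+2K\E Z$, the heart of the matter is then a single Bennett‑type bound on the centred cumulant generating function
\[
\psi_Z(\lambda):=\log\E\exp\!\bigl(\lambda(Z-\E Z)\bigr)\ \le\ v\bigl(e^{\lambda}-\lambda-1\bigr)\qquad\text{for all }\lambda>0 .
\]
Granting this, Markov's inequality $\PROB(Z\ge\E Z+t)\le\exp(-\lambda t+\psi_Z(\lambda))$ together with the elementary fact that the Legendre transform of $\lambda\mapsto v(e^{\lambda}-\lambda-1)$ is $v\,h(t/v)$, where $h(x)=(1+x)\log(1+x)-x\ge x^{2}/\bigl(2(1+x/3)\bigr)$, yields exactly the asserted inequality, and finally one undoes the normalization by homogeneity in $K$.

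To obtain the bound on $\psi_Z$, I would use the tensorization inequality for entropy: with $\mathrm{Ent}(f)=\E[f\log f]-\E f\log\E f$, and $\mathrm{Ent}^{(\ell)}$ denoting the same functional in the single variable $X_\ell$ with the others frozen, one has $\mathrm{Ent}(e^{\lambda Z})\le\sum_{\ell=1}^N\E\bigl[\mathrm{Ent}^{(\ell)}(e^{\lambda Z})\bigr]$. For each $\ell$ one introduces a comparison variable $Z_\ell$ depending only on $(X_k)_{k\ne\ell}$ (a leave‑one‑out or resampled version of $Z$), and the variational formula for entropy yields the ``modified logarithmic Sobolev'' bound $\mathrm{Ent}^{(\ell)}(e^{\lambda Z})\le\E^{(\ell)}\bigl[e^{\lambda Z}\,\phi(-\lambda(Z-Z_\ell))\bigr]$ with $\phi(u)=e^{u}-u-1$. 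Thus everything is reduced to controlling $\sum_{\ell}\E\bigl[e^{\lambda Z}\phi(-\lambda(Z-Z_\ell))\bigr]$; and by convexity of $\phi$ together with $\phi(0)=0$ (so that $\phi(-\lambda w)\le w^{2}\phi(\lambda)$ for $0\le w\le1$, $\lambda>0$) this is at most $\phi(\lambda)\,\E\bigl[e^{\lambda Z}\sum_{\ell}(Z-Z_\ell)_+^{2}\bigr]$.

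The crucial step — and the one I expect to be the main obstacle — is the \emph{self‑bounding estimate} for suprema of empirical processes that controls $\sum_{\ell}(Z-Z_\ell)_+^{2}$, in a form usable inside $\E[e^{\lambda Z}\,\cdot\,]$, by the quantity $v=\sigma^2_\GG+2K\E Z$. This is precisely Bousquet's sharpening of Massart's argument: picking a measurable maximizer $g^{\ast}$ for $Z$, one has $(Z-Z_\ell)_+\le|g^{\ast}(X_\ell)|\le K$ and $\sum_\ell g^{\ast}(X_\ell)=Z$, so $\sum_\ell(Z-Z_\ell)_+^{2}\le\sum_\ell g^{\ast}(X_\ell)^{2}$; decomposing each $g^{\ast}(X_\ell)^{2}$ into a conditional‑mean part (bounded by $\sigma_\ell^{2}$, whence $\sigma^2_\GG$) and a centred part, and bounding the remainder via $|g^{\ast}(X_\ell)|\le K$ and $\sum_\ell g^{\ast}(X_\ell)=Z$ (whence $2K\E Z$), gives the bound with the sharp constant $2$ in front of $\E Z$; the careful bookkeeping that keeps this constant equal to $2$, rather than something larger, is the technical core. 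Feeding this back into the tensorized entropy inequality produces, after dividing by $\E e^{\lambda Z}$, a differential inequality for $\psi_Z$ which — using $\psi_Z(0)=\psi_Z'(0)=0$ — integrates to $\psi_Z(\lambda)\le v(e^{\lambda}-\lambda-1)$. Combined with the Chernoff optimization and the normalization reduction of the first paragraph, this completes the proof.
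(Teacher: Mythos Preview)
The paper does not prove this theorem at all: it is quoted verbatim as a known result due to Bousquet \cite{bo02-1} (with a pointer also to \cite[Theorem~8.42]{FoucartRauhut13}) and then used as a black box in the proof of Theorem~\ref{thm:talagrand}. So there is no ``paper's own proof'' to compare against; your proposal goes well beyond what the paper attempts.

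That said, your outline is the right one --- it is precisely the entropy/Herbst route of Bousquet's original argument --- and the architecture (tensorized entropy, modified log-Sobolev with $\phi(u)=e^{u}-u-1$, the pointwise bound $\phi(-\lambda w)\le w^{2}\phi(\lambda)$ for $w\ge 0$, and finally the self-bounding control of $\sum_\ell (Z-Z_\ell)_+^{2}$) is correct. The one place I would urge more care is the sentence where you ``decompose each $g^{\ast}(X_\ell)^{2}$ into a conditional-mean part (bounded by $\sigma_\ell^{2}$) and a centred part, and bound the remainder via $|g^{\ast}(X_\ell)|\le K$ and $\sum_\ell g^{\ast}(X_\ell)=Z$''. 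As written this is not quite rigorous, because $g^{\ast}$ depends on \emph{all} coordinates, so the conditional mean of $g^{\ast}(X_\ell)^{2}$ given $(X_k)_{k\ne\ell}$ is not controlled by $\sigma_\ell^{2}=\sup_{g\in\GG}\E g(X_\ell)^{2}$ without an extra argument. In Bousquet's proof this is exactly where the sharp constant~$2$ is earned: one works with the shifted variable (in effect $Z+\sum_\ell \sigma_\ell^{2}$, after the $K=1$ normalization) and verifies a genuine self-bounding/sub-gamma inequality for it, rather than trying to bound $\sum_\ell g^{\ast}(X_\ell)^{2}$ directly inside $\E[e^{\lambda Z}\,\cdot\,]$. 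If you intend to flesh this out, that is the step to write carefully; everything else in your sketch is standard and correct.
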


In the situation of Theorem~\ref{thm:talagrand},
we consider $\GG = \{ g = f - \E[f(X)] : f \in \FF\}$, so that $\E g(X) = 0$ and $|g(X)| \leq 1 =: K$ almost surely
for all $g \in \GG$. Moreover, $\sigma_\ell^2 \leq \sigma^2 = \sup_{f \in \FF} \E[f(X_\ell)^2]$ so that $\sigma_\GG^2 \leq N\sigma^2$. It remains to estimate $\E Z$.

Symmetrization, see e.g.\ \cite[Lemma 6.3]{LT}, and Dudley's inequality in the form of \cite[Theorem 8.23]{FoucartRauhut13} yield, for a Rademacher sequence $\varepsilon_1,\hdots,\varepsilon_N$ independent of $X_1,\hdots,X_N$,
\begin{align*}
\E Z &= \E \sup_{f \in \FF} \left|\sum_{j=1}^N f(X_\ell) - \E[f(X_\ell)] \right|
\leq 2 \E \sup_{f \in \FF} \left| \sum_{j=1}^N \varepsilon_j f(X_\ell)\right| \\
&
\leq 8 \sqrt{2N} \E_X \int_0^{\Delta_X(\FF)/2}
\sqrt{ \log(2 \mathcal{N}(\FF, d_{X,2},u))} du,
\end{align*}
where the metric $d_{X,2}$ is given as
\[
d_{X,2}(f,g) = \left( \frac{1}{N} \sum_{j=1}^N (f(X_j) - g(X_j))^2 \right)^{1/2}
= \left(\E_\varepsilon \left( \frac{1}{\sqrt{N}} \sum_{j=1}^N \varepsilon_j (f(X_j) - g(X_j))\right)^2 \right)^{1/2}
\]
and $\mathcal{N}(\FF, d_{X,2},u)$ denote the  covering numbers of $\FF$, i.e., the minimal number of balls of radius $u$ in the metric $d_{X,2}$ required to cover $\FF$ and
\[
\Delta_X(\FF) := \left(\sup_{f \in \FF} \frac{1}{N}\sum_{j=1}^N f(X_j)^2 \right)^{1/2} =
\Big( \underbrace{\sup_{f \in \FF} \frac{1}{N}\sum_{j=1}^N f(X_j)}_{=:Y} \Big)^{1/2},
\]
where we have used that $f$ takes only values in $\{0,1\}$ in the equality.
It follows from Haussler's theorem \cite{Haussler1995} and the fact that
the functions in $\FF$ are $\{0,1\}$-valued (so that $\|f-g\|_{L_2(\mu)}^2 = \|f-g\|_{L_1(\mu)}$ for any probability measure $\mu$) that the covering numbers can be estimated via the VC-dimension $d$ as
\[
\mathcal{N}(\FF, d_{X,2},t) \leq e (d+1) (2e)^d t^{-2d}.
\]
Plugging this into our estimate of $\E Z$ above and noting that $(2e(d+1))^{1/2d}$ takes the maximum for $d=1$, so that $(2e(d+1))^{1/2d} \leq 2 \sqrt{e}$ for all $d \geq 1$, gives
\[
\E Z \leq 8 \sqrt{2N} \E \int_0^{\sqrt{Y}/2} \sqrt{\log(2e(d+1)(2e/u)^{2d})} du
\leq 16 \sqrt{Nd} \E \int_0^{\sqrt{Y}/2} \sqrt{\log(2 e \sqrt{2} /u)} du
\]
We use the Cauchy-Schwarz inequality to estimate the integral
\[
\int_0^\alpha \sqrt{\log(\gamma/u)}du \leq \sqrt{\int_0^\alpha 1 du} \sqrt{\int_0^\alpha \log(\gamma/u)} du = \sqrt{\alpha} \sqrt{\gamma \int_{\gamma/\alpha}^\infty \log(t) t^{-2} dt}
= \alpha \sqrt{\log(e\gamma/\alpha)}.
\]
Setting $\alpha = \sqrt{Y}/2$ and $\gamma = 2 e \sqrt{2}$, noting that
$t \mapsto \sqrt{t \log(2e\sqrt{2}/t)}$ is concave and applying Jensen's inequality gives
\[
\E Z \leq 16 \sqrt{Nd} \E \sqrt{\frac{Y}{4} \log\left(\frac{2e^2 \sqrt{2}}{Y/4}\right)}
\leq 16 \sqrt{Nd} \sqrt{ \frac{\E Y}{4} \log\left(\frac{2e^2\sqrt{2}}{\E Y /4}\right)}.
\]
Now observe that by the triangle inequality and since each $f$ takes values in $\{0,1\}$,
\[
Y \leq Z/N + \sup_{f \in \FF}  \E f(X) = Z/N + \sup_{f \in \FF} \E f^2(X) \leq Z/N + \sigma^2.
\]
Since $t \mapsto \sqrt{t \log(2e^{2}\sqrt{2}/t)}$ is increasing, this yields
\[
\E Z \leq 8 \sqrt{Nd} \sqrt{\frac{\E Z + N\sigma^2}{N}\log\left(\frac{8e^2\sqrt{2}}{\E Z/N + \sigma^2}\right)}
\leq 8 \sqrt{d} \sqrt{(\E Z + N \sigma^2)\log\left(\frac{8e^2\sqrt{2}}{\sigma^2}\right)}.
\]
Setting $Q := 8 \sqrt{d\log\left(\frac{8e^2\sqrt{2}}{\sigma^2}\right)}$ and squaring leads to the inequality $(\E Z)^2 \leq Q^2(\E Z + N\sigma^2)$ so that
\begin{align*}
\E Z &\leq Q^2/2 + \sqrt{Q^2 N \sigma^2 + Q^4/4} \leq Q^2 + Q \sqrt{N} \sigma \\
& = 64 d \log\left(\frac{8e^2\sqrt{2}}{\sigma^2}\right) + 8 \sigma \sqrt{Nd  \log\left(\frac{8e^2\sqrt{2}}{\sigma^2}\right)} = NR.
\end{align*}
It follows from \eqref{empirical:deviation} that
\[
\PROB \left( \sup_{f \in \FF} \big|\sum_{j=1}^N (f(X_j) - \E f(X_j))\big| \geq NR + t\right) \leq
\exp\left(-\frac{t^2/2}{N \sigma^2 + 2 NR + t/3}\right),
\]
which is equivalent to the statement of Theorem~\eqref{thm:talagrand}.

\section{Sparse recovery}
\label{Append:SparseRec}

We begin this section with an outline of the proof of how the $\ell_1$-quotient property leads to \eqref{error-estimate-general}. The null space property of $A$ of order $s$ with constant $\rho < 1$
requiring that 
\begin{equation}\label{def:NSP}
\sum_{j \in S} |v_j|  \leq \rho \sum_{j \in S^c} |v_j| \quad \mbox{ for all } v \in \ker A \setminus \{0\}
\mbox{ and all } S \subset \{1,\hdots,N\} \mbox{ with } \#S = s,
\end{equation} 
implies by  \cite[Theorem 4.12]{FoucartRauhut13} that the solution $x^\sharp$ of equality constrained
$\ell_1$-minimization \eqref{l1min-equal} with $y = Ax$ satisfies
\begin{equation}\label{err:bound:NSP}
\|x-x^\sharp\|_1 \leq \frac{2(1+\rho)}{1-\rho} \sigma_s(x)_1.
\end{equation}
If $y = Ax + w$, then the $\ell_1$-quotient property yields the existence of $v \in \R^n$
satisfying \eqref{l1:quotient:prop}, so that we can write $y = A(x+v)$. The error bound \eqref{err:bound:NSP} then leads to
\begin{align*}
\|x^\sharp - x\|_1& \leq \frac{2(1+\rho)}{1-\rho} \inf_{z:\|z\|_0 \leq s} \|x+v-z\|_1
\leq  \frac{2(1+\rho)}{1-\rho} \left(\inf_{z:\|z\|_0 \leq s} \|x-z\|_1 + \|v\|_1\right) \notag\\
& \leq \frac{2(1+\rho)}{1-\rho} \sigma_s(x)_1 +  \frac{2(1+\rho)}{c_2(1-\rho)} \triple{w}_{p} ,
\end{align*}
which is \eqref{error-estimate-general}.

\medskip

Next, let us turn to the example of noise-blind recovery when the measurement matrix has \iid columns, selected according to the random vector $X$, which has \iid coordinates, distributed according to the ($L_2$-normalized) Student-$t$ entries with $d=2\log N$ degrees of freedoms. In particular, the first moments of each coordinate are equivalent to that of a Gaussian random variable: for any $q \leq \log N$, $c_1 \|g\|_{L_q} \leq \|\xi\|_{L_q} \leq c_2 \|g\|_{L_q}$.
This example is particularly interesting
because it was recently shown (see, e.g., \cite{ML17} and \cite[Example 9]{DLR}) that the corresponding
random matrix satisfies the null space property \eqref{def:NSP} of order $s$ with high probability as long as 
$s \sim n/\log(eN/n)$. In addition, numerical tests in \cite{DLR} show that this random matrix behaves
precisely like a Gaussian random matrix in practical sparse recovery problems. 
However, the $\ell_1$-quotient
property of a Student-$t$ matrix was previously open.

It is straightforward to verify that for any $q \leq \log N$ and every $w \in \R^n$, $\|\inr{X,w}\|_{L_q} \sim \|\inr{G,w}\|_{L_q}$. Moreover, setting $p = \alpha \log(eN/n)$, the results in Section \ref{sec:L-p} imply that
$$
K_p(X) \sim B(L_p(X)) \sim B(L_p(G)) \sim \sqrt{\log(eN/n)} B_2^n;
$$
therefore,
$$
\triple{w}_{p} \sim_\alpha \|w\|_2 \sqrt{\log(eN/n)}.
$$
The general error estimate \eqref{error-estimate-general} and Theorem~\ref{thm:main1} together with
$s \sim n/\log(eN/n)$  lead to
\begin{equation}\label{error-estimate-quot}
\|x^\sharp - x\|_1 \lesssim \sigma_s(x)_1 + \sqrt{\log(eN/n)} \|w\|_2 \sim \sigma_s(x)_1 + \|w\|_2\sqrt{\frac{s}{n}} .
\end{equation}
Note that \eqref{error-estimate-quot} yields the same error estimate as \eqref{error-estimate} (up to absolute constants), but while \eqref{error-estimate} requires an a priori threshold for the noise level, \eqref{error-estimate-quot} does not, and the error depends on the true noise level $\|w\|_2$ rather than a potentially pessimistic upper bound. We refer to \cite{KKR18} for more results in this direction and corresponding numerical experiments.

\subsection*{Acknowledgements}

HR would like to thank the Isaac Newton Institute for Mathematical Science for support and hospitality during the program Approximation, Sampling and Compression in Data Science when work on this paper was undertaken. This work was supported by EPSRC Grant Number EP/R014604/1.

\noindent
OG thanks the funding of the Fondation Simone et Cino Del Duca for  the project "Ph\'enom\`enes en grande dimension".

\noindent
FK was supported by the German Science Foundations in the context of an Emmy Noether Junior Research Group (KR 4512/1-1)

\medskip

\noindent
{ \bf AMS 2010 Classification:}
{
primary: 52A22, 46B06, 60B20, 65K10
secondary: 52A23, 46B09, 15B52.
}

\noindent
{ \bf Keywords: }
{
Random polytopes, random matrices, heavy tails, 
small ball probability,
compressed sensing, $\ell_1$-quotient property.}

\bibliographystyle{abbrv}
\bibliography{random-poly}

\end{document}